\definecolor{darkgreen}{rgb}{.2,.8,0}
\newtheorem{dummytheorem}{Dummy-Theorem}[section]
\newcommand{\proofendsign}{$\Box$} 
\newtheorem{definition}[dummytheorem]{Definition}
\newtheorem{lemma}[dummytheorem]{Lemma}
\newtheorem{theorem}[dummytheorem]{Theorem}
\newtheorem{corollary}[dummytheorem]{Corollary}
\newtheorem{assumption}[dummytheorem]{Assumption}
\newenvironment{proof}{{\noindent \bf Proof }}
 {{\hspace*{\fill}\proofendsign\par\bigskip}}
\newtheorem{remarknorm}[dummytheorem]{Remark}
\newtheorem{examplenorm}[dummytheorem]{Example}
\newcommand{\V}{\mathbf{V}}
\newcommand{\N}{\mathbb{N}}
\newcommand{\Z}{\mathbb{Z}}
\newcommand{\R}{\mathbb{R}}
\newcommand{\C}{\mathbb{C}}
\newcommand{\F}{\mathbb{F}}
\newcommand{\D}{\mathbb{D}}
\newcommand{\B}{\mathbb{B}}
\newcommand{\pr}{\mathbb{P}}
\newcommand{\ex}{\mathbb{E}}
\newcommand{\covi}{\mathbb{C}{\rm ov}}
\newcommand{\eins}{\mathbbm{1}}
\newcommand{\vatr}{{\rm V@R}}
\newcommand{\avatr}{{\rm AV@R}}
\newcommand{\argmin}{{\rm argmin}}
\newcommand{\cadlag}{c\`adl\`ag}
\newcommand{\Had}{{\mbox{{\rm{\scriptsize{Had}}}}}}
\newcommand{\qHad}{{\mbox{{\rm{\scriptsize{qHad}}}}}}
\newcommand{\OFP}{(\Omega,{\cal F},\pr)}
\def\bcswitch{\left\{\renewcommand{\arraystretch}{1.2}\begin{array}{ccl}}
\def\ecswitch{\end{array}\right.}
\newcommand{\Timesir}{\raisebox{-2.2mm}{
\unitlength 1.00pt
\linethickness{0.5pt}
\begin{picture}(10.00,20.00)
\multiput(2.00,6.00)(0.12,0.18){50}{\line(0,1){0.18}}
\multiput(2.00,15.00)(0.12,-0.18){50}{\line(0,-1){0.18}}
\put(5.00,2.00){\makebox(0,0)[cc]{$\scriptstyle i=0$}}
\put(5.00,18.00){\makebox(0,0)[cc]{$\scriptstyle r$}}
\end{picture}
}}
\begin{document}

\title{Quasi-Hadamard differentiability of general risk functionals and its application}

\author{
Volker Krätschmer\footnote{Faculty of Mathematics, University of Duisburg--Essen, {\tt volker.kraetschmer@uni-due.de}}
\and {\setcounter{footnote}{3}}Alexander Schied\footnote{Department of Mathematics, University of Mannheim, {\tt schied@uni-mannheim.de}} {\setcounter{footnote}{6}}
\and
Henryk Z\"ahle\footnote{Department of Mathematics, Saarland University, {\tt zaehle@math.uni-sb.de}\hfill\break
A.S.~gratefully acknowledges support by Deutsche Forschungsgemeinschaft through the Research Training Group RTG 1953}}
\date{~
}

\maketitle

\begin{abstract}
We apply a suitable modification of the functional delta method to statistical functionals that arise from law-invariant coherent risk measures. To this end we establish   differentiability of the statistical functional in a relaxed Hadamard sense, namely with respect to a suitably chosen norm and in the directions of a specifically chosen  \lq\lq tangent space". We show that this notion of quasi-Hadamard differentiability  yields both strong laws and limit theorems for the asymptotic distribution of the plug-in estimators. Our results can be regarded  as  a contribution to the statistics and numerics of risk measurement and as a case study  for possible refinements of the functional delta method through fine-tuning the underlying notion of differentiability\end{abstract}

{\bf Keywords:} Functional delta method; quasi-Hadamard derivative; law-invariant coherent risk measure; Kusuoka representation; weak limit theorem; strong limit theorem


\newpage

\section{Introduction}\label{Introduction}

Let $X$ be a random variable describing the future profits and losses of a financial position.
When assessing its risk,   $\rho(X)$,   in terms of a risk measure $\rho$ it is   common  to estimate  $\rho(X)$ by means of a Monte Carlo procedure or from a sequence of historical data. This problem is well-posed when  $\rho$ is \lq\lq law-invariant" in the sense that there exists a functional $\mathcal R_\rho$ such that  $\rho(X)=\mathcal R_\rho(F_X)$  for $F_X$ denoting the distribution function  of $X$. In this case,   a natural estimate for $\rho(X)$ is given by $\mathcal R_\rho(\widehat F_n)$, where $\widehat F_n$ is the  empirical distribution function of the given data or another suitable estimate for $F_X$. In recent years, the statistical properties of the plug-in estimator $\mathcal R_\rho(\widehat F_n)$ and of the statistical functional $\mathcal R_\rho(\cdot)$ have been the subject of a number of studies. This includes studies on consistency and robustness \cite{ContDeguestScandolo,Kraetschmeretal2013}, elicitability \cite{Gneiting,Ziegel}, and weak limit theorems \cite{BelomestnyKraetschmer2012,BeutnerZaehle2010,PflugWozabal2010}. Here we continue and extend the latter class of studies by suitably adapting the modified functional delta method from \cite{BeutnerZaehle2010} to the case of a general law-invariant coherent risk measure $\rho$.

The functional delta method allows to lift a functional weak limit theorem from the level of the input data to the level of the plug-in estimators; see  \cite{vonMises,Reeds1976,Fernholz1983,Gill1989} for early references. This means in our present context and under suitable technical assumptions that if  there are numbers $(r_n)_{n\in\mathbb N}$ such that $r_n(\widehat F_n-F_X)$ converges in law to some random variable $B^\circ$, then $r_n(\mathcal R_\rho(\widehat F_n)-\mathcal R_\rho( F_X))$ converges in law to the random variable $\dot{\mathcal R}_{\rho,F_X}(B^\circ)$, where $\dot{\mathcal R}_{\rho,F_X}(\cdot)$ is a suitable derivative of the statistical functional $\mathcal R_\rho$ at $F_X$.
The problem of establishing a weak limit theorem for the plug-in estimators $\mathcal R_\rho(\widehat F_n)$ is thus reduced to  the following two independent steps:\\
$\phantom{a}$\quad (a) showing a weak limit theorem for the sequence $(\widehat F_n)$;\\
$\phantom{a}$\quad (b) proving that $\mathcal R_\rho$ admits the  derivative $\dot{\mathcal R}_{\rho,F_X}(\cdot)$.\\
When carrying out this  program  it is crucial to fine-tune the notion of differentiability of the functional $\mathcal R_\rho$. On the one hand, this notion needs to be sufficiently strong so that in combination with step (a) it yields the desired weak limit theorem for $(\mathcal R_\rho(\widehat F_n))_{n\in\mathbb N}$. On the other hand,  the  derivative $\dot{\mathcal R}_{\rho,F_X}(\cdot)$ should exist for a wide class of risk measures $\rho$ and distribution functions $F_X$, which will only be the case when differentiability is understood in a sufficiently weak sense. The classical notion used in the literature is 
Hadamard differentiability, which is stronger than 
differentiability in the sense of G\^ateaux but weaker than 
Fr\'echet differentiability (cf.\ e.g.\ \cite{Shapiro1990,vanderVaartWellner1996}). But, as observed in \cite{BeutnerZaehle2010},  this notion is still too strong to be applied to some of the most common law-invariant coherent risk measures $\rho$. This includes in particular many distortion risk measures $\rho_g$ that are defined as follows for  concave,  nondecreasing 
distortion functions $g:[0,1]\to[0,1]$ satisfying $g(0) = 0$ and $g(1) = 1$,
\begin{equation}\label{distortion risk measure}
\rho_g(X)=\mathcal R_{\rho_g}(F_X)=\int_{-\infty}^0g(F_X(x))\,dx - \int_0^\infty(1-g(F_X(x))\,dx.
\end{equation}
Therefore, a relaxed notion of \emph{quasi-Hadamard differentiability} was proposed in \cite{BeutnerZaehle2010}. This notion differs from classical Hadamard differentiability mainly by taking derivatives only in the directions of a relatively small \lq\lq tangent space" equipped with a suitably weighted norm and by relying on convergence with respect to this weighted norm. In contrast to the existing literature on tangential 
Hadamard differentiability in the context of the functional delta method, no single distribution function will have a finite length w.r.t.\ the imposed (weighted) norm, meaning that no single distribution function will belong to the \lq\lq tangent space".

In this paper, our goal is to extend the functional delta method based on quasi-Hadamard differentiability to a general class of law-invariant coherent risk measures. It is known that any such risk measure $\rho$ admits the following Kusuoka representation,
\begin{equation}\label{kusuoka rep eq}
\rho(X)=\sup_{g\in\mathcal G}\rho_g(X),
\end{equation}
where $\mathcal G$ is a class of distortion functions and  $\rho_g$ denotes the distortion risk measure  (\ref{distortion risk measure}) for  $g\in\mathcal G$. The so-called expectiles risk measures are examples  for risk measures that are of the form (\ref{kusuoka rep eq}) but not of the form (\ref{distortion risk measure}); see  Example \ref{example rmboe}. The Kusuoka representation (\ref{kusuoka rep eq}) will also play an important role in our formulas for the quasi-Hadamard derivatives of the statistical functionals $\mathcal R_\rho$.

By analyzing quasi-Hadamard differentiability of general law-invariant coherent risk measures  we are pursuing two different objectives. On the one hand we are aiming to contribute to the asymptotic analysis of plug-in estimators in risk measurement, with a view toward statistical inference, Monte Carlo computation, and optimization. On the other hand, we wish to provide a case study for possible refinements of the functional delta method through fine-tuning the underlying notion of differentiability.
In this latter respect, the scope of our analysis is not limited to applications in risk assessment.

Our main results on the quasi-Hadamard differentiability of statistical functionals $\mathcal R_\rho$ will be given in Theorems \ref{theorem qhd of r} and \ref{special case drm} under two different sets of assumptions. These assumptions will be illustrated in Sections \ref{Illustration section 2.1} and \ref{Illustration section 2.2} by means of a number of examples. Our applications to statistical inference are given in Section \ref{applications to statistics}. Specifically, our weak limit theorem for the sequence of plug-in estimator will be stated in Section \ref{asymptotic distribution of plug-in estimators} along with discussions for the cases of independent, weakly dependent, or strongly dependent data. In Section \ref{strong laws} a strong law of the form $ r_n({\cal R}_\rho(\widehat F_n)-{\cal R}_\rho(F_0))\to 0$ will be stated and discussed. The proofs of our main results are contained in Section \ref{Proof of QHD of R}. In Appendix \ref{appendix QHD and FDM} we recall the notion of quasi-Hadamard differentiability  and state some  auxiliary results in a general setting. Appendix \ref{Separability of uniform metric} contains an auxiliary result on Skorohod spaces.

\section{Main results}\label{Main result}

Let $(\Omega,{\cal F},\pr)$ be an atomless probability space and denote by $L^p(\Omega,{\cal F},\pr)$ the usual $L^p$-spaces for $p\in[1,\infty]$. Let ${\cal X}$ be a subspace of $L^{1}\OFP$ containing $L^{\infty}\OFP$. An element $X$ of ${\cal X}$ will be interpreted as the P\&L of a financial position. As usual (cf.\ e.g.\ \cite{ArtznerEtAl1999,FoellmerSchied2011}), we will say that a map $\rho:{\cal X}\to\R$ is a {\em coherent risk measure} if it is
\begin{itemize}
    \item monotone: $\rho(X)\ge\rho(Y)$ for all $X,Y\in{\cal X}$ with $X\le Y$,
    \item cash-invariant: $\rho(X+m)=\rho(X)-m$ for all $X\in{\cal X}$ and $m\in\R$,
    \item subadditive: $\rho(X+Y)\le\rho(X)+\rho(Y)$ for all $X,Y\in{\cal X}$,
    \item positively homogenous: $\rho(\lambda X)=\lambda\,\rho(X)$ for all $X\in{\cal X}$ and $\lambda\ge 0$.
\end{itemize}
A coherent risk measure $\rho$ will be called {\em law-invariant} if $\rho(X)=\rho(Y)$ whenever $X$ and $Y$ have the same law under $\pr$.

To give a typical example, let $g:[0,1]\to[0,1]$ be a concave distortion function, i.e.\ a concave and nondecreasing function with $g(0)=0$ and $g(1)=1$. The {\em distortion risk measure} associated with $g$ is defined by
\begin{equation}\label{def distortion risk measure - 0}
    \rho_g(X)\,:=\,\int_{-\infty}^0 g(F_X(x))\,dx-\int_0^\infty \big(1-g(F_X(x))\big)\,dx
\end{equation}
for every random variable $X\in L^0(\Omega,{\cal F},\pr)$ satisfying $\int_0^\infty g(1-F_{|X|}(x)\big)\,dx<\infty$, where $F_X$ and $F_{|X|}$ denote the distribution functions of $X$ and $|X|$, respectively. The set ${\cal X}$ of all such random variables forms a linear subspace of $L^1(\Omega,{\cal F},\pr)$; this follows from \ \cite[Proposition 9.5]{Denneberg1994} and \cite[Proposition 4.75]{FoellmerSchied2011}. It is known that $\rho_g$ is a law-invariant coherent risk measure; see, for instance, \cite{WangDhaene1998}.
If specifically $g(t)=(t/\alpha)\wedge 1$ for any fixed $\alpha\in(0,1)$, then we have ${\cal X}=L^1(\Omega,{\cal F},\pr)$ and $\rho_g$ is nothing but the Average Value at Risk at level $\alpha$. The latter is defined by
\begin{equation}\label{def avatr}
    \avatr_\alpha(X)\,:=\,\frac{1}{\alpha}\int_0^\alpha\vatr_s(X)\,ds,\qquad X\in L^1(\Omega,{\cal F},\pr),
\end{equation}
where $\vatr_s(X):=-F_X^\rightarrow(s)$ is the Value at Risk at level $s$. Here and elsewhere $F^{\leftarrow}(s):=\inf\{x\in\R:F(x)\ge s\}$ and $F^{\rightarrow}(s):=\inf\{x\in\R:F(x)>s\}$ denote the left-continuous and the right-continuous inverses of $F$ at $s$, respectively.

For any coherent risk measure $\rho:{\cal X}\to\R$ we can define a function $g_\rho:[0,1]\to[0,1]$ by
\begin{equation}\label{definition of g rho}
    g_\rho(t)\,:=\,\rho(-B_{1,t}),\qquad t\in[0,1], 
\end{equation}
where $B_{1,t}$ is a Bernoulli random variable with expectation $t.$ Clearly, $g_\rho$ is distortion function, i.e.\ a nondecreasing function $g_\rho:[0,1]\to[0,1]$ with $g_\rho(0)=0$ and $g_\rho(1)=1$, and we will refer to $g_\rho$ as the {\em distortion function associated with $\rho$}. For an alternative representation of $g_\rho$ see part (ii) of Theorem \ref{theorem kusuoka} below.

Distortion risk measures w.r.t.\ concave distortion functions are known to be building blocks of general law-invariant coherent risk measures on $L^{\infty}\OFP$. This is the so called Kusuoka representation (cf. \cite{Kusuoka}). More precisely, there exists some set ${\cal G}$ of concave distortion functions such that $\rho = \sup_{g\in {\cal G}}\rho_{g}.$ Recently, the Kusuoka representation has been extended to law-invariant coherent risk measures on more general spaces ${\cal X}$ under the following technical conditions; cf. \cite{BelomestnyKraetschmer2012} or \cite{KraetschmerZaehle2011}.

\begin{assumption}\label{assumption for kusuoka}
Let ${\cal X}$ be a Stonean vector lattice, i.e.\ $X\wedge Y,\,X\vee Y\in{\cal X}$ for all $X,Y\in{\cal X}$. Moreover, let $\rho$ be a law-invariant coherent risk measure on ${\cal X}$ and assume that the following conditions hold:
\begin{itemize}
    \item[(a)] $\lim_{k\to\infty}\rho(-(X-k)^{+})=0$ for all nonnegative $X\in {\cal X}$.
    \item[(b)] $\lim_{t\downarrow 0}g_\rho(t)=0$.
\end{itemize}
\end{assumption}

Remark \ref{remark on assumption a b} and Examples \ref{example ospm}--\ref{example hrm} below will illustrate Assumption \ref{assumption for kusuoka}. The next theorem deals with the above-mentioned extension of the Kusuoka representation for law-invariant coherent risk measures on general spaces ${\cal X}.$ Under Assumption \ref{assumption for kusuoka}, the representing concave distortion functions satisfy some additional useful properties.

\begin{theorem}\label{theorem kusuoka}
Suppose that Assumption \ref{assumption for kusuoka} holds. Then we can find some set ${\cal G}_\rho$ of continuous concave distortion functions such that the following assertions hold:
\begin{itemize}
    \item[(i)] ${\cal G}_\rho$ is compact w.r.t.\ the uniform metric.
    \item[(ii)] $g_\rho=\sup_{g\in{\cal G}_\rho}g$.
    \item[(iii)] $\sup_{g\in{\cal G}_\rho}g'(t)\le g_\rho(\gamma t)/(\gamma t)$ for all $\gamma,t\in(0,1)$.
    \item[(iv)] $\rho(X)=\sup_{g\in{\cal G}_\rho}\rho_{g}(X)$ for all $X\in{\cal X}$.
    \item[(v)] For every $X\in{\cal X}$, the mapping ${\cal G}_{\rho}\to\R$, $g\mapsto\rho_{g}(X)$ is lower semicontinuous w.r.t.\ the uniform metric, and it is even continuous if in addition $\int_{-\infty}^{0}g_{\rho}(\gamma F_{X}(x))\,dx < \infty$ holds for some $\gamma\in (0,1)$.
\end{itemize}
\end{theorem}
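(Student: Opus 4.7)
The plan is to build up the set ${\cal G}_\rho$ step by step, starting from a Kusuoka representation on $L^\infty\OFP$, and then to verify (i)--(v) using Assumption~\ref{assumption for kusuoka}.

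First, I would invoke the classical Kusuoka theorem to obtain a set ${\cal G}^*$ of concave distortion functions with $\rho(X)=\sup_{g\in{\cal G}^*}\rho_g(X)$ for all $X\in L^{\infty}\OFP$. Evaluating both sides at $X=-B_{1,t}$ yields $g_\rho(t)=\sup_{g\in{\cal G}^*}g(t)$, so in particular $g\le g_\rho$ pointwise for every $g\in{\cal G}^*$. Combined with Assumption~\ref{assumption for kusuoka}(b) this forces $\lim_{t\downarrow 0}g(t)=0$, so every $g\in{\cal G}^*$ is continuous on $[0,1]$ (concavity already gives continuity on $(0,1]$). I would then define ${\cal G}_\rho$ as the closure of ${\cal G}^*$ in the uniform metric; this closure still consists of concave distortion functions bounded by $g_\rho$, so assertion (ii) is preserved.

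For (i), the bounds $0\le g\le g_\rho$ together with $g_\rho(t)\downarrow 0$ give uniform equicontinuity of ${\cal G}_\rho$ at $0$. Away from $0$, the concavity of $g$ with $g(0)=0$ implies that $s\mapsto g(s)/s$ is nonincreasing, hence for $\varepsilon\in(0,1)$ and $s,t\in[\varepsilon,1]$ one has $|g(t)-g(s)|\le(g_\rho(\varepsilon)/\varepsilon)|t-s|$, yielding uniform Lipschitz control on $[\varepsilon,1]$. Arzelà--Ascoli then furnishes compactness. For (iii), fix $g\in{\cal G}_\rho$, $t\in(0,1)$ and $\gamma\in(0,1)$: since $g$ is concave with $g(0)=0$, its right derivative at $t$ is dominated by the slope of the secant from $0$ to $\gamma t$, i.e.\ $g'(t)\le g(\gamma t)/(\gamma t)\le g_\rho(\gamma t)/(\gamma t)$.

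For (iv), I would extend the representation from $L^{\infty}\OFP$ to ${\cal X}$ by truncation. Writing $X=X^{+}-X^{-}$ and $X_k:=(-k)\vee(X\wedge k)\in L^\infty$, monotonicity plus cash-invariance and subadditivity of $\rho$ give $|\rho(X)-\rho(X_k)|\le\rho(-(X^{+}-k)^{+})+\rho(-(X^{-}-k)^{+})$, which vanishes by Assumption~\ref{assumption for kusuoka}(a). On the $\rho_g$ side, the integral representation~(\ref{def distortion risk measure - 0}) and $g\le g_\rho$ together with (a) (applied within each $\rho_g$) yield $\rho_g(X_k)\to\rho_g(X)$ uniformly in $g\in{\cal G}_\rho$, so the sup in~$g$ commutes with the limit in~$k$ and gives (iv) on ${\cal X}$. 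This is essentially the argument used in \cite{BelomestnyKraetschmer2012,KraetschmerZaehle2011}, which I would cite for the technical details.

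Finally, for (v), fix $X\in{\cal X}$ and a sequence $g_n\to g$ uniformly in ${\cal G}_\rho$. Lower semicontinuity comes from Fatou's lemma applied to (\ref{def distortion risk measure - 0}): the integrand $g_n(F_X(x))$ on $(-\infty,0)$ and $1-g_n(F_X(x))$ on $(0,\infty)$ converge pointwise, and Fatou gives $\liminf_n\rho_{g_n}(X)\ge\rho_g(X)$ after noting the correct signs. Under the additional hypothesis $\int_{-\infty}^{0}g_\rho(\gamma F_X(x))\,dx<\infty$ for some $\gamma\in(0,1)$, I would use (iii) via $g_n(t)\le (t/\gamma)\cdot g_\rho(\gamma t)/(\gamma t)\cdot\gamma=g_\rho(\gamma t)/\gamma\cdot\ldots$; more cleanly, concavity together with $g_n\le g_\rho$ gives a uniform integrable majorant (after a small reshuffling using $g_\rho(\gamma F_X(x))$ on the loss side and the finiteness of $F_X$ on the gain side), so dominated convergence upgrades the one-sided bound to continuity.

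The main obstacle I expect is the delicate passage in (iv) and (v), where the integrability is controlled \emph{uniformly in} $g\in{\cal G}_\rho$. Everything rests on the pointwise bound $g\le g_\rho$ from (ii) and the tail control in Assumption~\ref{assumption for kusuoka}, and getting the dominating function in (v) to be integrable on $(-\infty,0)$ is the step requiring the extra hypothesis $\int_{-\infty}^{0}g_\rho(\gamma F_X(x))\,dx<\infty$.
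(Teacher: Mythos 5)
Your overall strategy matches the paper's: assertions (i), (ii) and (iv) are delegated to the known Kusuoka-representation results of \cite{BelomestnyKraetschmer2012,KraetschmerZaehle2011} (which you partially re-sketch), (iii) is obtained from concavity together with the pointwise bound $g\le g_\rho$, and (v) is attacked via Fatou and dominated convergence. Your derivation of (iii), namely $g'(t)\le g(\gamma t)/(\gamma t)\le g_\rho(\gamma t)/(\gamma t)$ from the monotonicity of chord slopes, is a correct and slightly more self-contained alternative to the paper's route through the modulus bound $|g(s)-g(t)|\le g_\rho(|s-t|)$.

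There is, however, a genuine gap in your argument for the lower semicontinuity in (v). Writing $\rho_{g_n}(X)=A_n-B_n$ with $A_n=\int_{-\infty}^0 g_n(F_X(x))\,dx$ and $B_n=\int_0^\infty\big(1-g_n(F_X(x))\big)\,dx$, Fatou's lemma applied to the nonnegative integrands gives $\liminf_n A_n\ge A$ \emph{and} $\liminf_n B_n\ge B$; because $B_n$ enters with a minus sign, the second inequality points the wrong way, and ``noting the correct signs'' cannot repair this: to conclude $\liminf_n(A_n-B_n)\ge A-B$ you need $\limsup_n B_n\le B$, i.e.\ reverse Fatou, which requires an integrable majorant for $1-g_n(F_X(\cdot))$ on $[0,\infty)$ (the trivial bound $1$ is not integrable there). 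The paper supplies exactly this majorant: by concavity and (iii), $1-g_n(F_X(x))\le(1-F_X(x))\,g_n'(1/2)\le 4\,g_\rho(1/4)\,(1-F_X(x))$ for $x>F_X^{\leftarrow}(1/2)$, while the integrand is bounded on the remaining compact piece, and $\int_0^\infty(1-F_X(x))\,dx<\infty$ since $X\in L^1\OFP$; dominated convergence then yields $B_n\to B$, and combined with Fatou on the loss side this gives lower semicontinuity. You do invoke a domination of this kind in the continuity part, but the lsc claim must hold \emph{without} the extra hypothesis $\int_{-\infty}^0 g_\rho(\gamma F_X(x))\,dx<\infty$, so the gain-side majorant has to be produced explicitly there as well. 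A secondary point worth a sentence in a final write-up: after replacing ${\cal G}^*$ by its uniform closure you should justify that the new limit points $g$ still satisfy $\rho_g\le\rho$ on all of ${\cal X}$ (not just on $L^\infty\OFP$), since $g\le g_\rho$ alone does not imply this; the cited references handle this, but your construction as written leaves it implicit.
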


The proof of Theorem \ref{theorem kusuoka} can be found in Section \ref{proof of theorem kusuoka}. If $\rho$ is a distortion risk measure associated with a continuous concave distortion function $g$, then, of course, $g_\rho=g$ and ${\cal G}_\rho$ reduces to the singleton ${\cal G}_{\rho}=\{g\}$.

Now, let $\phi:\R\to[1,\infty)$ be a {\em weight function}, i.e.\ a continuous function that is nondecreasing on $(-\infty,0]$ and nonincreasing on $[0,\infty)$. Let $D_{\phi}$ be the space of all \cadlag\ functions $v$ on $\R$ with $\|v\|_\phi:=\|v\phi\|_\infty<\infty$, 
where $\|f\|_\infty:=\sup_{x\in\R}|f(x)|$ denotes the sup-norm of a real-valued function $f$ on $\R$. Let $\F_{\cal X}$ be the set of distribution functions of the random variables from ${\cal X}$. For any given $F_0\in\F_{\cal X}$, we denote by $D_{\phi,F_0}$ the subspace of all $v\in D_\phi$ vanishing outside $[F_0^{\rightarrow}(0),F_0^{\leftarrow}(1)]$, and by $C_{\phi,F_0}$ the subspace of all functions in $D_{\phi,F_0}$ whose discontinuity points on $(F_0^{\rightarrow}(0),F_0^{\leftarrow}(1))$ are discontinuity points of $F_{0}$. Both $C_{\phi,F_0}$ and $D_{\phi,F_0}$ will be equipped with the norm $\|\cdot\|_{\phi}$.

\begin{assumption}\label{assumption for qhd}
Let $\rho$ be a law-invariant coherent risk measure on ${\cal X}$, and $g_\rho$ the distortion function associated with $\rho$ as defined in (\ref{definition of g rho}). Let $F_0\in\F_{\cal X}$, $\phi$ be a weight function as above, and assume that the following conditions holds:
\begin{itemize}
    \item[(a)] There exists a finite set $D(F_0)$of real numbers such that $F_0$ is continuously differentiable with strictly positive derivative on $(F_0^{\rightarrow}(0),F_0^{\leftarrow}(1))\setminus D(F_{0})$.
    \item[(b)] $\int_{F_0^\rightarrow(0)}^{F_0^\leftarrow(1)} g_\rho(\gamma F_0(x))/(F_0(x)\phi(x))\,dx<\infty$ for some $\gamma\in(0,1)$.
       \end{itemize}
\end{assumption}

Remarks \ref{remark on assumption d} and \ref{example for assumption d} below will illustrate Assumption \ref{assumption for qhd}\,(b). When $\rho$ is law-invariant we may associate with $\rho$ a statistical functional ${\cal R}_\rho$ defined on the class  $\F_{\cal X}$ of all distribution functions of  elements of ${\cal X}$ via
\begin{equation}\label{definition of risk functional}
    {\cal R}_\rho(F_X)\,:=\,\rho(X),\qquad X\in\cal X.
\end{equation}
The following theorem involves the notion of quasi-Hadamard differentiability, which is recalled in Definition \ref{definition quasi hadamard} in the Appendix \ref{appendix QHD and FDM}. In our specific setting, the roles of $\V$, $\V_0$, $\C_0$ and $\V'$ from Definition \ref{definition quasi hadamard} are played by $D$, $D_{\phi,F_0}$, $C_{\phi,F_0}$ and $\R$, respectively. Here $D$ is the space of all bounded \cadlag\ functions. That is, quasi-Hadamard differentiability of ${\cal R}_\rho$ at $F_0$ tangentially to $C_{\phi,F_0}\langle D_{\phi,F_0}\rangle$ means that we can find some continuous map $\dot{\cal R}_{\rho,F_0}:C_{\phi,F_0}\to\R$ such that
$$
    \lim_{n\to\infty}\,\Big|\frac{{\cal R}_\rho(F_0+h_nv_n)-{\cal R}_\rho(F_0)}{h_n}-\dot{\cal R}_{\rho,F_0}(v)\Big|\,=\,0
$$
holds for every triplet $(v,(v_n),(h_n))$ with $v\in C_{\phi,F_0}$, $(v_n)\subset D_{\phi,F_0}$ satisfying $\|v_n-v\|_{\phi}\to 0$ and $(F_0+h_nv_n)\subset\F_{\cal X}$, and $(h_n)\subset(0,\infty)$ satisfying $h_n\to 0$.

\begin{theorem}\label{theorem qhd of r}
Suppose that Assumptions \ref{assumption for kusuoka} and \ref{assumption for qhd} hold, and let ${\cal G}_\rho$ be given by Theorem \ref{theorem kusuoka}. Then the functional ${\cal R}_\rho$ defined in (\ref{definition of risk functional}) is quasi-Hadamard differentiable at $F_0$ tangentially to $C_{\phi,F_0}\langle D_{\phi,F_0}\rangle$ with quasi-Hadamard derivative $\dot {\cal R}_{\rho,F_0}$ given by
\begin{equation}\label{gh ableitung}
    \dot{\cal R}_{\rho,F_0}(v)\,:=\,\lim_{\varepsilon\downarrow 0}\,\sup_{g\in{\cal G}_\rho(F_0,\varepsilon)}\int_{F_0^\rightarrow(0)}^{F_0^\leftarrow(1)} g'(F_0(x))\,v(x)\,dx\,,\qquad v\in C_{\phi,F_0},
\end{equation}
where ${\cal G}_{\rho}(F_0,\varepsilon)$ denotes the set of all $g\in {\cal G}_{\rho}$ satisfying ${\cal R}_{\rho}(F_0) - \varepsilon\leq {\cal R}_{g}(F_0)$, and $g'$ stands for the right-sided derivative of $g$.

If in addition $\int_{-\infty}^{0} g_{\rho}(\delta F_{0}(x))\,dx < \infty$ holds for some $\delta\in (0,1)$, then the set ${\cal G}_\rho(F_{0})$ of all $g\in{\cal G}_\rho$ satisfying ${\cal R}_\rho(F_0) = {\cal R}_g(F_0)$ is nonempty, and the quasi-Hadamard derivative has the following form
\begin{equation}\label{def of qh ableitung}
    \dot{\cal R}_{\rho,F_0}(v)\,:=\,\sup_{g\in{\cal G}_\rho(F_{0})}\int_{F_0^\rightarrow(0)}^{F_0^\leftarrow(1)} g'(F_0(x))\,v(x)\,dx\,,\qquad v\in C_{\phi,F_0}.
\end{equation}

\end{theorem}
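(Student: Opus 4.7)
The plan is to exploit the Kusuoka representation $\mathcal{R}_\rho = \sup_{g \in \mathcal{G}_\rho} \mathcal{R}_{\rho_g}$ from Theorem~\ref{theorem kusuoka}(iv) and split the proof into two tasks: (i) quasi-Hadamard differentiability of each individual $\mathcal{R}_{\rho_g}$ at $F_0$ with derivative $I_g(v) := \int_{F_0^\rightarrow(0)}^{F_0^\leftarrow(1)} g'(F_0(x))\,v(x)\,dx$, with convergence \emph{uniform} in $g \in \mathcal{G}_\rho$; and (ii) an envelope argument that extracts the claimed formula for the derivative of the supremum.

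For (i), since $v_n$ vanishes outside $[F_0^\rightarrow(0), F_0^\leftarrow(1)]$, the difference quotient equals
\[
\int_{F_0^\rightarrow(0)}^{F_0^\leftarrow(1)} \frac{g(F_0(x)+h_nv_n(x))-g(F_0(x))}{h_n}\,dx.
\]
By concavity of $g$ the integrand is dominated in absolute value by $g'(F_0(x))\,|v_n(x)|$; combining the uniform bound $g'(F_0(x))\le g_\rho(\gamma F_0(x))/(\gamma F_0(x))$ from Theorem~\ref{theorem kusuoka}(iii) with $|v_n(x)|\le\|v_n\|_\phi/\phi(x)$ and Assumption~\ref{assumption for qhd}(b) furnishes an $n$- and $g$-independent integrable majorant. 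Dominated convergence gives pointwise quasi-Hadamard differentiability for each fixed $g$, and uniformity in $g \in \mathcal{G}_\rho$ is obtained from compactness of $\mathcal{G}_\rho$ (Theorem~\ref{theorem kusuoka}(i)) and the regularity of $F_0$ in Assumption~\ref{assumption for qhd}(a). A direct corollary is the uniform Lipschitz estimate $|\mathcal{R}_{\rho_g}(F_0+h_nv_n)-\mathcal{R}_{\rho_g}(F_0)|=O(h_n)$, which propagates via $|\sup A-\sup B|\le\sup|A-B|$ to $|\mathcal{R}_\rho(F_0+h_nv_n)-\mathcal{R}_\rho(F_0)|=O(h_n)$.

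For (ii), set $N_n:=h_n^{-1}[\mathcal{R}_\rho(F_0+h_nv_n)-\mathcal{R}_\rho(F_0)]$ and $D(v):=\lim_{\varepsilon\downarrow 0}\sup_{g\in\mathcal{G}_\rho(F_0,\varepsilon)}I_g(v)$. For the upper bound $\limsup_n N_n\le D(v)$, pick $g_n^{(1)}\in\mathcal{G}_\rho$ with $\mathcal{R}_{\rho_{g_n^{(1)}}}(F_0+h_nv_n)\ge\mathcal{R}_\rho(F_0+h_nv_n)-h_n^2$; the Lipschitz estimate then forces $\mathcal{R}_\rho(F_0)-\mathcal{R}_{\rho_{g_n^{(1)}}}(F_0)=O(h_n)$, so $g_n^{(1)}\in\mathcal{G}_\rho(F_0,\varepsilon)$ eventually for every fixed $\varepsilon>0$, and together with $\mathcal{R}_\rho(F_0)\ge\mathcal{R}_{\rho_{g_n^{(1)}}}(F_0)$ and uniform QHD one obtains $N_n\le I_{g_n^{(1)}}(v)+o(1)\le\sup_{g\in\mathcal{G}_\rho(F_0,\varepsilon)}I_g(v)+o(1)$; sending $\varepsilon\downarrow 0$ closes this half. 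For the matching lower bound $\liminf_n N_n\ge D(v)$, fix $\delta>0$ and pick $g_n^{(2)}\in\mathcal{G}_\rho(F_0,\delta h_n)$ with $I_{g_n^{(2)}}(v)\ge D(v)-\delta$ (possible since $\sup_{g\in\mathcal{G}_\rho(F_0,\varepsilon)}I_g(v)\downarrow D(v)$); then $\mathcal{R}_\rho(F_0)-\mathcal{R}_{\rho_{g_n^{(2)}}}(F_0)\le\delta h_n$ combined with $\mathcal{R}_\rho(F_0+h_nv_n)\ge\mathcal{R}_{\rho_{g_n^{(2)}}}(F_0+h_nv_n)$ and uniform QHD yields $N_n\ge I_{g_n^{(2)}}(v)-\delta+o(1)\ge D(v)-2\delta+o(1)$, and $\delta\downarrow 0$ finishes.

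For the second part of the theorem, the extra integrability allows Theorem~\ref{theorem kusuoka}(v) to deliver continuity of $g\mapsto\mathcal{R}_{\rho_g}(F_0)$ on the compact set $\mathcal{G}_\rho$, so $\mathcal{G}_\rho(F_0)$ is nonempty; the identification $D(v)=\sup_{g\in\mathcal{G}_\rho(F_0)}I_g(v)$ then follows from continuity of $g\mapsto I_g(v)$ on $\mathcal{G}_\rho$ (via a.e.\ pointwise convergence of derivatives of uniformly convergent concave functions together with the uniform majorant) and compactness of $\mathcal{G}_\rho(F_0)$. I expect the main obstacle to be the uniformity in $g$ of the quasi-Hadamard differentiability in~(i); once that is secured, the envelope arguments in~(ii) become comparatively standard.
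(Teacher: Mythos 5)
Your top-level architecture coincides with the paper's: the paper also factors ${\cal R}_\rho={\cal S}_\rho\circ{\cal T}_\rho$ with ${\cal T}_\rho(F)=({\cal R}_g(F))_{g\in{\cal G}_\rho}\in\ell^\infty({\cal G}_\rho)$, proves quasi-Hadamard differentiability of ${\cal T}_\rho$ (your step (i), i.e.\ differentiability of each ${\cal R}_g$ \emph{uniformly} in $g$), and then composes with the Hadamard derivative of the sup-functional via a chain rule; your step (ii) is essentially a self-contained reproof of the Danskin/R\"omisch envelope result that the paper simply cites, and that part is fine. The problem is that the entire technical content of the theorem sits in step (i), and there you have two genuine gaps.

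First, the claimed domination of the difference quotient by $g'(F_0(x))\,|v_n(x)|$ is false when $v_n(x)<0$: for a concave $g$ and $a<b$ the secant slope satisfies $\frac{g(b)-g(a)}{b-a}\ge g'(b)$, so with $b=F_0(x)$ and $a=F_0(x)+h_nv_n(x)$ your proposed bound is an inequality in the \emph{wrong direction}. The correct majorant comes from $g(a)\ge (a/b)g(b)$ (using $g(0)=0$), which gives $\frac{g(F_0(x))}{F_0(x)}|v_n(x)|$ and then $\frac{g(t)}{t}\le\frac{g_\rho(\gamma t)}{\gamma t}$; this is still integrable under Assumption \ref{assumption for qhd}\,(b), so this error is repairable, but one must also handle the boundary case $(F_0+h_nv_n)(x)=0$ separately, as the paper does in its Step 4. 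Second, and more seriously, uniformity in $g$ does \emph{not} follow from ``compactness of ${\cal G}_\rho$ and the regularity of $F_0$'': a $g$-independent integrable majorant together with pointwise convergence for each fixed $g$ yields $|a_n(g)-I_g(v)|\to0$ for each $g$, not $\sup_{g\in{\cal G}_\rho}|a_n(g)-I_g(v)|\to0$; compactness would only help if you had equicontinuity of $g\mapsto a_n(g)$ uniformly in $n$, which you neither state nor prove. The paper obtains the uniformity by a different mechanism: it truncates $g$ to $g_k(t)=\int_0^t\eins_{[t_k,1-t_k]}(s)g'(s)\,ds$, rewrites $\int\frac{g_k(F_0+h_nv_n)-g_k(F_0)}{h_n}\,dx$ as $\int_0^1\frac{(F_0+h_nv_n)^\leftarrow-F_0^\leftarrow}{h_n}\eins_{[t_k,1-t_k]}g'\,dt$ via the quantile representation, and then bounds the supremum over $g$ by $\big(\sup_{t\in[t_k,1-t_k]}|\cdots|\big)\int_{t_k}^{1-t_k}\sup_g g'(t)\,dt$, so that the $g$-dependence factors out and the uniform convergence of quantile difference quotients (van der Vaart, Lemma 21.4 — this is where Assumption \ref{assumption for qhd}\,(a) is really used) finishes the job; the tails are controlled by a separate $3\varepsilon$-argument. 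Your proposal contains no substitute for this step, and you acknowledge as much in your closing sentence, so as written the proof is incomplete at its central point.
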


The proof of Theorem \ref{theorem qhd of r} can be found in Section \ref{ersterHauptbeweis}.

\begin{remarknorm}
Under condition (b) from Assumptions \ref{assumption for qhd}, we obtain $\int_{-\infty}^{0}g_{\rho}(\delta F_{0}(x))\,dx < \infty$ for some $\delta\in (0,1)$ if $F_{0}\,\phi$ is bounded on $(-\infty,0)$. Indeed, choosing $\gamma\in (0,1)$ from part (b) of Assumptions \ref{assumption for qhd},
we may conclude with H\"older's inequality that the function
$g_{\rho}(\gamma F_0)\eins_{(F_{0}^{\rightarrow}(0)\wedge 0,0)}=((g_{\rho}(\gamma F_0))/(F_0\,\phi))F_0\,\phi\eins_{(F^{\rightarrow}(0)\wedge 0,0)}$ is integrable w.r.t.\ the Lebesgue measure on $\R$. {\hspace*{\fill}$\Diamond$\par\bigskip}
\end{remarknorm}

For distortion risk measures $\rho=\rho_g$ with continuous distortion function $g$, Theorem \ref{theorem qhd of r} can be improved and the assumptions on $\phi$ and $F_0$ can be relaxed as follows. In this case we clearly have  $g_\rho=g$.

\begin{assumption}\label{assumption for qhd - distortion}
Let $\rho_{g}:{\cal X}\rightarrow\R$ be a distortion risk measure associated with a continuous concave distortion function $g$ as defined in (\ref{def distortion risk measure - 0}). Moreover, let $F_0\in\F_{{\cal X}}$, $\phi$ be a weight function, and assume that the following conditions holds:
\begin{itemize}
    \item[(a)] The set of points $x\in(F_{0}^{\rightarrow}(0),F_{0}^{\leftarrow}(1))$ for which $g$ is not differentiable at $F_{0}(x)$ has Lebesgue measure zero.
    \item[(b)] Assumption \ref{assumption for qhd}\,(b) holds for $\rho=\rho_g$.
\end{itemize}
\end{assumption}

\begin{theorem}\label{special case drm}
Suppose that Assumption \ref{assumption for qhd - distortion} holds. Then the functional ${\cal R}_g={\cal R}_{\rho_g}$ is quasi-Hadamard differentiable at $F_0$ tangentially to $C_{\phi,F_0}\langle D_{\phi,F_0}\rangle$ with quasi-Hadamard derivative $\dot {\cal R}_{g}$ given by
\begin{equation}\label{special case drm - eq}
   \dot{\cal R}_{g}(v)\,:=\,\int_{F_0^\rightarrow(0)}^{F_0^\leftarrow(1)} g'(F_0(x))\,v(x)\,dx,\qquad v\in C_{\phi,F_0},
\end{equation}
where $g'$ denotes as before the right-sided derivative of $g$.
\end{theorem}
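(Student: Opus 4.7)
The plan is to reduce the quasi-Hadamard claim to a direct application of Lebesgue's dominated convergence theorem on the interval $I := (F_0^\rightarrow(0),\,F_0^\leftarrow(1))$. Combining the two integrals in (\ref{def distortion risk measure - 0}) (both absolutely convergent by the definition of ${\cal X}$) gives
$$
{\cal R}_g(F_0+h_n v_n) - {\cal R}_g(F_0) \,=\, \int_\R \bigl[g(F_0(x)+h_n v_n(x)) - g(F_0(x))\bigr]\,dx,
$$
and since $v_n\in D_{\phi,F_0}$ vanishes outside $I$, it suffices to show that the difference quotients
$$
q_n(x) \,:=\, \frac{g(F_0(x)+h_n v_n(x)) - g(F_0(x))}{h_n},\qquad x\in I,
$$
converge in $L^1(I)$ to $g'(F_0)\,v$.

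For the pointwise convergence, note that $\phi\ge 1$ implies $\|v_n-v\|_\infty\le\|v_n-v\|_\phi\to 0$, so $v_n(x)\to v(x)$ at every $x$. At any $x\in I$ where $g$ is differentiable at $F_0(x)$, differentiability of $g$ together with $h_n v_n(x)\to 0$ yields $q_n(x)\to g'(F_0(x))\,v(x)$; the case $v(x)=0$ is handled by the local Lipschitz bound $|q_n(x)|\le g'(F_0(x)/2)\,|v_n(x)|$, valid for large $n$ by concavity. Assumption \ref{assumption for qhd - distortion}(a) ensures this covers almost every $x\in I$.

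The hard part is constructing a common integrable majorant $\Phi$; the subtlety is that on $\{v_n<0\}$ the argument $F_0+h_n v_n$ can be arbitrarily close to $0$, where $g'$ may blow up. Set $M:=\sup_n\|v_n\|_\phi<\infty$. On $\{v_n\ge 0\}$, concavity of $g$ gives $|q_n|\le g'(F_0)\,v_n$, and the slope inequality $g'(s)\le g(\gamma s)/(\gamma s)$ (immediate from $g(0)=0$ and concavity) together with $|v_n|\le M/\phi$ yields $|q_n|\le (M/\gamma)\,g(\gamma F_0)/(F_0\phi)$. On $\{v_n<0\}$, set $\beta:=h_n v_n^-/F_0\in[0,1]$ (the bound $\beta\le 1$ follows from $F_0+h_n v_n\ge 0$, a property of distribution function values); concavity with $g(0)=0$ gives $g((1-\beta)F_0)\ge(1-\beta)g(F_0)$, whence $|q_n|\le v_n^-\,g(F_0)/F_0\le M\,g(F_0)/(F_0\phi)$. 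Finally, the concavity estimate $g(F_0)\le g(\gamma F_0)+((1-\gamma)/\gamma^2)\,g(\gamma^2 F_0)$, obtained from the tangent-line inequality at $\gamma F_0$ combined with the slope bound, reduces the integrability of $g(F_0)/(F_0\phi)$ to Assumption \ref{assumption for qhd - distortion}(b) applied with $\gamma$ and $\gamma^2$ (both in $(0,1)$). A single $\Phi\in L^1(I)$ therefore dominates all $|q_n|$, and dominated convergence concludes the limit.

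Continuity of the linear map $\dot{\cal R}_g$ on $(C_{\phi,F_0},\|\cdot\|_\phi)$ is then immediate from the same estimate: $|\dot{\cal R}_g(v)|\le\|v\|_\phi\int_I g'(F_0)/\phi\,dx$, and the integral is finite by $g'(F_0)\le g(\gamma F_0)/(\gamma F_0)$ and Assumption \ref{assumption for qhd - distortion}(b).
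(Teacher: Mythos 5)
Your proof is correct, and it takes a genuinely more direct route than the paper's. The paper proves Theorem \ref{special case drm} by recycling the machinery of the proof of Theorem \ref{theorem qhd of r}: it introduces the truncated distortion functions $g_k(t)=\int_0^t\eins_{[t_k,1-t_k]}(s)g'(s)\,ds$, splits the error into the three pieces $|a_n-a_{n,k}|$, $|a_{n,k}-b_k|$ and $|b_k-b|$, and passes to the limit first in $n$ and then in $k$, with a $k$-dependent majorant $c_k\,g(\gamma F_0)/(F_0\phi)$ (where $c_k\sim 1/t_k$) for the middle piece. You instead apply dominated convergence once, directly to the untruncated difference quotients $q_n$ on $(F_0^\rightarrow(0),F_0^\leftarrow(1))$: the a.e.\ pointwise limit comes from Assumption \ref{assumption for qhd - distortion}\,(a) (with the local Lipschitz bound $g'(F_0(x)/2)|v_n(x)|$ covering the points where $v(x)=0$), and your $n$-independent majorant is, up to constants, the same function $g(\gamma F_0)/(F_0\phi)$ that drives the paper's estimates, obtained from the slope inequality $g'(t)\le g(\lambda t)/(\lambda t)$ (the paper's (\ref{AbschaetzungAbleitung})) on $\{v_n\ge0\}$ and from $g((1-\beta)t)\ge(1-\beta)g(t)$ on $\{v_n<0\}$. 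Two cosmetic remarks: your two-step bound $g(F_0)\le g(\gamma F_0)+((1-\gamma)/\gamma^2)g(\gamma^2F_0)$ (which needs the observation that Assumption (b) for $\gamma$ implies it for $\gamma^2$, since $g$ is nondecreasing) can be replaced by the one-line estimate $g(t)\le g(\gamma t)/\gamma$, which follows from the monotonicity of $t\mapsto g(t)/t$; and your argument never uses $v\in C_{\phi,F_0}$ rather than $v\in D_{\phi,F_0}$, so it in fact yields differentiability tangentially to the larger space. The truncation in the paper is genuinely needed only in the general Kusuoka setting of Theorem \ref{theorem qhd of r}, where uniformity over ${\cal G}_\rho$ is obtained through the quantile-transform step; for a single distortion function your shorter argument suffices.
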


The proof of Theorem \ref{special case drm} can be found in Section \ref{zweiterHauptbeweis}. Theorem \ref{special case drm} partially generalizes Theorem 2.2 in \cite{BeutnerZaehle2010} where it was assumed that $g'$ is bounded. On the other hand, in \cite{BeutnerZaehle2010} the distortion function $g$ was not required to be concave.


\subsection{Illustration of Assumption \ref{assumption for kusuoka}}\label{Illustration section 2.1}

\begin{remarknorm}\label{remark on assumption a b}
It is worth pointing out that conditions (a)--(b) in Assumption \ref{assumption for kusuoka} are always fulfilled if one can find a 
complete norm $\|\cdot\|$ on the Stonean vector lattice ${\cal X}$ such that, for every $X,Y,X_1,X_2,\ldots\in{\cal X}$, we have $\|X\|\le\|Y\|$ when $|X|\le|Y|$, and $\lim_{k\to\infty}\|X_k\|=0$ when $X_k\uparrow 0$ $\pr$-a.s. As already discussed in \cite[Remark 3.2]{BelomestnyKraetschmer2012} this follows from results in \cite{RuszczynskiShapiro2006}. General classes of random variables meeting these requirements are $L^p(\Omega,{\cal F},\pr)$ equipped with the usual 
$L^p$-norm for $p\in[1,\infty]$, and, more generally, the Orlicz heart $H^\psi(\Omega,{\cal F},\pr)$ equipped with the Luxemburg 
norm associated with a continuous Young function $\psi$; for more details see \cite[Remark 3.2]{BelomestnyKraetschmer2012}.
{\hspace*{\fill}$\Diamond$\par\bigskip}
\end{remarknorm}

The discussion in the preceding Remark \ref{remark on assumption a b} shows that every law-invariant coherent risk measure $\rho$ which is defined on some $L^p(\Omega,{\cal F},\pr)$ for some $p\in[1,\infty]$, or, more generally, on some Orlicz heart $H^\psi(\Omega,{\cal F},\pr)$ for some continuous Young function $\psi$, is covered by Theorem \ref{theorem qhd of r}. Examples are the one-sided $p$th moment risk measure defined on $L^p(\Omega,{\cal F},\pr)$, $p\in[1,\infty)$, the expectiles-based risk measure defined on $L^2(\Omega,{\cal F},\pr)$, and the Haezendonck--Goovaerts risk measure associated with a continuous Young function $\psi$ defined on $H^\psi(\Omega,{\cal F},\pr)$. For details see the following Examples \ref{example ospm}--\ref{example hrm} and \cite[Examples 3.2--3.4]{BelomestnyKraetschmer2012}.

\begin{examplenorm}\label{example ospm}
Given $a\in(0,1]$ and $p\in[1,\infty)$, the {\em one-sided $p$th moment risk measure} is defined by
\begin{eqnarray*}
     \rho(X)\,:=\,-\ex[X]+a\| (X-\ex[X])^{-} \|_{p}, \qquad X\in L^p(\Omega,{\cal F},\pr).
\end{eqnarray*}
In \cite{Fischer2003} it has been shown that $\rho$ is a law-invariant coherent risk measure. The associated distortion function is given by $g_{\rho}(t)=t+a(1-t)t^{1/p}$, $t\in[0,1]$. Since $\rho$ is defined on $L^p(\Omega,{\cal F},\pr)$, it satisfies conditions (a)--(b) in Assumption \ref{assumption for kusuoka}; cf.\ Remark \ref{remark on assumption a b}. But by Lemma A.5 in \cite{KraetschmerZaehle2011} it not a distortion risk measure for $a>0$.
{\hspace*{\fill}$\Diamond$\par\bigskip}
\end{examplenorm}

\begin{examplenorm}\label{example rmboe}
In \cite{BelliniKlarMuellerRosazza-Gianin2013} it has been pointed out that expectiles, genuinely introduced in \cite{NeweyPowell1987}, may be viewed as law-invariant coherent risk measures. The {\it expectiles-based risk measure} associated with $\alpha\in [1/2,1)$ is defined by
$$
     \rho(X):=\,\argmin\Big\{(1-\alpha)\| ((-X)-x)^{-}\|_{2}^{2}\,+\,\alpha \| ((-X)-x)^{+} \|_{2}^{2}~:~x\in\R\Big\},~\, X\in L^{2}(\Omega,{\cal F},\pr).
$$
The associated distortion function is given by $g_{\rho}(t)=(\alpha t)/(1-\alpha+t(2\alpha-1))$, $t\in[0,1]$. Since $\rho$ is defined on $L^2(\Omega,{\cal F},\pr)$, it satisfies conditions (a)--(b) in Assumption \ref{assumption for kusuoka}; cf.\ Remark \ref{remark on assumption a b}.
The set $\mathcal{G}_\rho$ corresponding to the Kusuoka representation of $\rho$ is identified in  \cite[Theorem 8]{Delbaen2013}. This result implies  in particular that $\rho$ is not a distortion risk measure unless $\alpha=1/2$.
{\hspace*{\fill}$\Diamond$\par\bigskip}
\end{examplenorm}

\begin{examplenorm}\label{example hrm}
Let $\psi$ be a strictly increasing continuous Young function with $\psi(1)=1$. By Young function we mean a nondecreasing, unbounded, convex function $\psi:\R_+\to\R_+$ with $\psi(0+)=0$. Let $\mathcal{X}=H^\psi(\Omega,{\cal F},\pr)$ be the Orlicz heart associated with $\psi$,
$$
    H^\psi(\Omega,{\cal F},\pr)\,:=\,\big\{X\in L^0\,:\,\ex[\,\Psi(c|X|)\,]<\infty\mbox{ for all $c>0$}\big\},
$$
and fix $\alpha\in(0,1)$. It was shown in \cite{Goovaertsetal2004} that for every $X\in H^\psi(\Omega,{\cal F},\pr)$ and every $x\in\R$ with $\pr[X>x]>0$ there exists a unique real number $\pi_{\alpha}^{\psi}(X,x)>x$ such that
$$
     \ex\Big[\psi\Big(\frac{(X-x)^{+}}{\pi_{\alpha}^{\psi}(X,x) - x}\Big)\Big] = 1- \alpha.
$$
Therefore we may uniquely define
$$
     \rho(X)\,:=\,\inf\big\{\pi_{\alpha}^{\psi}(- X,x)\,:\,x\in\R\mbox{ with } \pr[-X>x]>0\big\},\quad X\in H^\psi(\Omega,{\cal F},\pr).
$$
It is known from \cite{BelliniRosazzaGianin2008,KraetschmerZaehle2011} that $\rho$ is a law-invariant coherent risk measure, sometimes referred to as {\em  Haezendonck--Goovaerts risk measure} associated with $\psi$ and $\alpha$. For the associated distortion function we have
\begin{equation}\label{example hrm - eq 1}
     g_{\rho}(t)\,\le\,1\wedge\Big(t+\frac{1-t}{\psi^{-1}((1-\alpha)/t)}\Big)\qquad\mbox{for all }t\in[0,1],
\end{equation}
where $\psi^{-1}$ denotes the inverse function of $\psi$. Since $\rho$ is defined on $H^\psi(\Omega,{\cal F},\pr)$, it satisfies conditions (a)--(b) in Assumption \ref{assumption for kusuoka}; cf.\ Remark \ref{remark on assumption a b}. However, as explained in the Appendix of \cite{KraetschmerZaehle2011}, $\rho$ is not a distortion risk measure in general.
{\hspace*{\fill}$\Diamond$\par\bigskip}
\end{examplenorm}


\subsection{Illustration of Assumption \ref{assumption for qhd}\,(b)}\label{Illustration section 2.2}

\begin{remarknorm}\label{remark on assumption d}
If $-\infty<F_0^{\rightarrow}(0)$ and $F_0^{\leftarrow}(1)<\infty$, i.e.\ if $dF_0$ has compact support, then the integrability condition (b) in Assumption \ref{assumption for qhd} is fulfilled for every weight function $\phi$. That is, in this case we may assume without loss of generality that $\phi=\eins$.
{\hspace*{\fill}$\Diamond$\par\bigskip}
\end{remarknorm}

\begin{remarknorm}\label{example for assumption d}
Assume that $-\infty=F_0^{\rightarrow}(0)$ and $F_0^{\leftarrow}(1)=\infty$. If $\limsup_{t\to 0+}g_{\rho}(t)/t^{\beta} < \infty$, which equivalently means that $g_\rho(t)\le C\,t^\beta$, $t\in[0,1]$, for some constants $C\in(0,\infty)$ and $\beta\in(0,1]$, then the integrability condition (b) in Assumption \ref{assumption for qhd} is implied by
\begin{equation}\label{example for assumption d - eq}
    \int_{-\infty}^\infty\frac{1}{F_0(x)^{1-\beta}\phi(x)}\,dx\,<\,\infty.
\end{equation}
Conditions (b) in Assumption \ref{assumption for qhd} and the integrability condition (\ref{example for assumption d - eq}) are even equivalent if in addition $\liminf_{t\to 0+}g_{\rho}(t)/t^{\beta}>0$ holds. In particular, for every weight function $\phi$, Assumption \ref{assumption for qhd}\,(b)  and (\ref{example for assumption d - eq}) are equivalent if $\lim_{t\to 0+}g_{\rho}(t)/t^{\beta}\in (0,\infty)$.

(i) For the one-sided $p$th moment risk measure defined in Example \ref{example ospm}, with $p\in[1,\infty)$, we have $\lim_{t\to 0+}g_{\rho}(t)/t^{1/p}\in (0,\infty)$. Thus, for every weight function $\phi$, Assumption \ref{assumption for qhd}\,(b) is equivalent to
$$
    \int_{-\infty}^\infty\frac{1}{F_0(x)^{(p-1)/p}\,\phi(x)}\,dx\,<\,\infty.
$$

(ii) For the expectiles-based risk measure as defined in Example \ref{example rmboe}, with $\alpha\in [1/2,1)$, we have $\lim_{t\to 0+}g_{\rho}(t)/t=\alpha/(1-\alpha)\in(0,\infty)$. Thus, for every weight function $\phi$, Assumption \ref{assumption for qhd}\,(b) is equivalent to
$$
    \int_{-\infty}^\infty\frac{1}{\phi(x)}\,dx\,<\,\infty.
$$

(iii) For the Haezendonck--Goovaerts risk measure as defined in Example \ref{example hrm}, with $\psi$ and $\alpha\in(0,1)$, we may conclude from (\ref{example hrm - eq 1}) that $\limsup_{t\to 0+}g_{\rho}(t)/t^{\beta}<\infty$ for some $\beta\in (0,1]$ whenever $\liminf_{t\to 0 +}\psi^{-1}((1-\alpha)/t) t^{\beta} > 0$. The latter condition may be described equivalently by the condition $\liminf_{t\to 0 +}\psi^{-1}(1/t) t^{\beta} > 0$, and is satisfied if $\limsup_{x\to\infty}\psi(x)/x^{1/\beta}<\infty$. Thus, condition (b) in Assumption \ref{assumption for qhd}  is satisfied if (\ref{example for assumption d - eq}) holds with this choice of $\beta$.

(iv) For $\rho=\avatr_\alpha$ defined in (\ref{def avatr}) we obviously have $g_\rho(t)=g(t)=(t/\alpha)\wedge 1$. So we have, in particular, $\lim_{t\to 0+}g_{\rho}(t)/t=1/\alpha\in(0,\infty)$. Thus, Assumption \ref{assumption for qhd}\,(b) is satisfied for every $F_0\in\mathbb{F}_{L^1(\Omega,\mathcal{F},\mathbb{P})}$ if and only if
$$
    \int_{-\infty}^\infty\frac{1}{\phi(x)}\,dx\,<\,\infty.
$$
{\hspace*{\fill}$\Diamond$\par\bigskip}
\end{remarknorm}


\section{Application to statistical inference}\label{applications to statistics}

The quasi-Hadamard derivative $\dot{\cal R}_{\rho,F_0}$ of ${\cal R}_\rho$ evaluated at $G-F_0$ can be seen as a measure for the sensitivity of a sequence of plug-in estimators for ${\cal R}_\rho(F_0)$ w.r.t.\ a contamination $F_{0,h}:=(1-h)F_0+hG$ of $F_0$ (with $h$ small) for some given distribution function $G$; cf.\ the discussion in Section 5 of \cite{Kraetschmeretal2012a}. It also facilitates the derivation of weak and strong limit theorems for plug-in estimators of ${\cal R}_\rho(F_0)$, and establishing such limit theorems will be our goal in this section. For any given $F_0\in\F_{\cal X}$, we equip $D_{\phi,F_0}$ with the trace $\sigma$-algebra ${\cal D}_{\phi,F_0}:={\cal D}\cap D_{\phi,F_0}$, where ${\cal D}$ denotes the $\sigma$-algebra generated by the coordinate projections on the space $D$ of all bounded \cadlag\ functions on $\R$. Notice that ${\cal D}_{\phi,F_0}$ coincides with the ball $\sigma$-algebra on $(D_{\phi,F_0},\|\cdot\|_\phi)$. This fact may be obtained for $\phi=\eins$ following \cite[Problem IV.2.2]{Pollard1984}. For general $\phi$, one uses that $D_{\phi,F_{0}}$ and $D_{\eins,F_{0}}$ are isometrically isomorphic for their respective norms. As a consequence, any $\|\cdot\|_{\phi}$-closed and separable subset of $D_{\phi,F_{0}}$ belongs to ${\cal D}_{\phi,F_0}$; see \cite[hint for Problem 1.7.4]{vanderVaartWellner1996}.
Convergence in distribution will be understood in the sense of \cite{Pollard1984,ShorackWellner1986}.


\subsection{Asymptotic distributions of plug-in estimators}\label{asymptotic distribution of plug-in estimators}

The quasi-Hadamard differentiability established in Theorems \ref{theorem qhd of r} and \ref{special case drm} provides a very general device to determine asymptotic distributions of plug-in estimators of law-invariant coherent risk measures. Recall that the probability space $(\Omega,{\cal F},\pr)$ was assumed to be atomless.

\begin{theorem}\label{main theorem coroll}
Suppose that Assumptions \ref{assumption for kusuoka} and \ref{assumption for qhd} hold, or that Assumption \ref{assumption for qhd - distortion} holds. Let
$\widehat F_n:\Omega\to D$ be a map for every $n\in\N$, and assume that the following conditions hold:
\begin{itemize}
    \item[(a)] $\widehat F_n$ takes values only in $\F_{\cal X}$ and is $({\cal F},{\cal D})$ measurable, $n\in\N$.
    \item[(b)] $\widehat F_n-F_0$ takes values only in $D_{\phi,F_0}$, $n\in\N$.
    \item[(c)] There are some random element $B^\circ$ of $(D_{\phi,F_0},{\cal D}_{\phi,F_0})$ as well as some $\|\cdot\|_{\phi}$-separable and ${\cal D}_{\phi,F_0}$-measurable subset $C\subset C_{\phi,F_0}$ with $\pr[B^{\circ}\in C]=1$, and a nondecreasing sequence $(r_n)\subset(0,\infty)$ with $r_n\uparrow\infty$, such that
    \begin{equation}\label{condition on B circ}
        r_n(\widehat F_n-F_0)\stackrel{\sf d}{\longrightarrow}B^\circ\qquad\mbox{in $(D_{\phi,F_0},{\cal D}_{\phi,F_0},\|\cdot\|_{\phi})$}.
    \end{equation}
\end{itemize}
Then
\begin{equation}\label{conv of emp process statistics}
    r_n({\cal R}_\rho(\widehat F_n)-{\cal R}_\rho(F_0))\stackrel{\sf d}{\longrightarrow}\dot{\cal R}_{\rho,F_0}(B^\circ)\qquad\mbox{in $(\R,{\cal B}(\R))$},
\end{equation}
with $\dot{\cal R}_{\rho,F_0}$ defined as in (\ref{gh ableitung}). If in addition $\int_{-\infty}^{0} g_{\rho}(\delta F_{0}(x))\,dx < \infty$ holds for some $\delta\in (0,1)$ (which, e.g., is the case if the restriction of $F_0\,\phi$ to $(-\infty,0)$ is bounded), then $\dot{\cal R}_{\rho,F_0}$ is as in (\ref{def of qh ableitung}).
\end{theorem}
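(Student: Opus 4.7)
The plan is to deduce this result directly from the quasi-Hadamard version of the functional delta method collected in Appendix \ref{appendix QHD and FDM}. The required differentiability input is provided by Theorem \ref{theorem qhd of r} under Assumptions \ref{assumption for kusuoka}--\ref{assumption for qhd}, and by Theorem \ref{special case drm} under Assumption \ref{assumption for qhd - distortion}: in either case ${\cal R}_\rho$ is quasi-Hadamard differentiable at $F_0$ tangentially to $C_{\phi,F_0}\langle D_{\phi,F_0}\rangle$ with continuous derivative $\dot{\cal R}_{\rho,F_0}$ of the form (\ref{gh ableitung}). The probabilistic input is supplied by hypotheses (a)--(c): together these say exactly that $r_n(\widehat F_n - F_0)$ is a $(D_{\phi,F_0},{\cal D}_{\phi,F_0})$-valued random element that converges in distribution to $B^\circ$, whose law concentrates on a $\|\cdot\|_\phi$-separable and ${\cal D}_{\phi,F_0}$-measurable subset $C\subset C_{\phi,F_0}$.

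The core of the argument would then be a Skorohod-type coupling carried out on the separable closed subspace of $(D_{\phi,F_0},\|\cdot\|_\phi)$ generated by $C$: one constructs, on an auxiliary probability space, random elements $\widetilde B_n$ and $\widetilde B^\circ$ with the same laws as $r_n(\widehat F_n - F_0)$ and $B^\circ$, respectively, and with $\|\widetilde B_n - \widetilde B^\circ\|_\phi \to 0$ almost surely. Applying the defining property of quasi-Hadamard differentiability along each such realization, with $h_n = r_n^{-1}$, $v_n = \widetilde B_n$ and $v = \widetilde B^\circ \in C_{\phi,F_0}$, one obtains
$$
    r_n\big({\cal R}_\rho(F_0 + r_n^{-1}\widetilde B_n) - {\cal R}_\rho(F_0)\big) \longrightarrow \dot{\cal R}_{\rho,F_0}(\widetilde B^\circ) \quad \mbox{a.s.},
$$
and transferring back via the distributional identity between $F_0 + r_n^{-1}\widetilde B_n$ and $\widehat F_n$ yields (\ref{conv of emp process statistics}).

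The hard part is the non-separability of $(D_{\phi,F_0},\|\cdot\|_\phi)$, which rules out a direct appeal to the classical Skorohod representation or to Portmanteau-style arguments. This is precisely what is circumvented by the separability-of-support condition in (c): the coupling can be executed on the closed linear span of $C$, and the continuity of $\dot{\cal R}_{\rho,F_0}$ on $C_{\phi,F_0}$, together with $\pr[B^\circ\in C]=1$, secures the measurability of $\dot{\cal R}_{\rho,F_0}(B^\circ)$ as a real-valued random variable. Finally, the simplified representation (\ref{def of qh ableitung}) of the derivative under the supplementary integrability assumption is inherited directly from the second part of Theorem \ref{theorem qhd of r}, while the sufficient condition that $F_0\phi$ be bounded on $(-\infty,0)$ reduces to that integrability by the Remark following that theorem.
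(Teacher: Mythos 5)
Your overall strategy differs from the paper's: the paper does not redo the delta method but cites the Modified Functional Delta-Method (Theorem 4.1 of \cite{BeutnerZaehle2010}) as a black box, whereas you propose to re-derive it via an almost-sure (Skorohod--Dudley--Wichura) representation on the separable support $C$. That route is in principle viable and correctly identifies why the separability hypothesis in (c) is needed. However, there is a genuine gap: you never establish that $\omega\mapsto{\cal R}_\rho(\widehat F_n(\omega))$ (more generally, $\omega'\mapsto{\cal R}_\rho(W(\omega')+F_0)$ for a $({\cal F}',{\cal D}_{\phi,F_0})$-measurable $W$ with $W+F_0\in\F_{\cal X}$) is a measurable real-valued map. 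Since ${\cal R}_\rho=\sup_{g\in{\cal G}_\rho}{\cal R}_g$ is a supremum over an \emph{uncountable} family, this is not automatic, and without it the left-hand side of (\ref{conv of emp process statistics}) need not be a random variable at all --- nor is your ``transfer back via the distributional identity'' step justified, since the law of ${\cal R}_\rho(F_0+r_n^{-1}\widetilde B_n)$ is then undefined. This measurability verification is precisely the content of the paper's proof: one first gets $({\cal F}',{\cal B}(\R))$-measurability of $\omega'\mapsto{\cal R}_g(W(\omega')+F_0)$ for each fixed $g$ via joint measurability of $(x,\omega')\mapsto W(x,\omega')$ (right-continuity in $x$ plus coordinate measurability) and Fubini's theorem, and then reduces the uncountable supremum to a countable one by combining the lower semicontinuity of $g\mapsto{\cal R}_g(F)$ from Theorem \ref{theorem kusuoka}\,(v) with the compactness of ${\cal G}_\rho$ from Theorem \ref{theorem kusuoka}\,(i). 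Your remark that continuity of $\dot{\cal R}_{\rho,F_0}$ and $\pr[B^\circ\in C]=1$ give measurability of the \emph{limit} $\dot{\cal R}_{\rho,F_0}(B^\circ)$ is fine, but it does not address the pre-limit quantities.

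A secondary point you should handle if you insist on the coupling route: after replacing $r_n(\widehat F_n-F_0)$ by a version $\widetilde B_n$ on an auxiliary space, you need $F_0+r_n^{-1}\widetilde B_n\in\F_{\cal X}$ almost surely in order to invoke the defining property of quasi-Hadamard differentiability, and this does not follow from equality of laws alone because $\{v\in D_{\phi,F_0}: F_0+r_n^{-1}v\in\F_{\cal X}\}$ need not belong to ${\cal D}_{\phi,F_0}$; one must use a representation in which $\widetilde B_n$ is obtained by composing $r_n(\widehat F_n-F_0)$ with a (perfect) measurable map, so that its range is preserved. Both issues are avoided by following the paper and reducing to \cite[Theorem 4.1]{BeutnerZaehle2010}, whose only nontrivial remaining hypothesis is exactly the measurability condition discussed above.
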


\begin{proof}
First of all we note that $\widehat F_n-F_0$ is $({\cal F},{\cal D}_{\phi,F_0})$-measurable, because we assumed that $\widehat F_n$ is $({\cal F},{\cal D})$-measurable, $\widehat F_n-F_0 \in D_{\phi,F_0}$ and ${\cal D}_{\phi,F_0}={\cal D}\cap D_{\phi,F_0}$. In particular, $r_n(\widehat F_n-F_0)$ is a random element of $(D_{\phi,F_0},{\cal D}_{\phi,F_0})$.

If Assumptions \ref{assumption for kusuoka} and \ref{assumption for qhd} are satisfied, then Theorem \ref{theorem qhd of r} yields that ${\cal R}_\rho$ is quasi-Hadamard differentiable at $F_0$ tangentially to $C_{\phi,F_0}\langle D_{\phi,F_0}\rangle$. If Assumption \ref{assumption for qhd - distortion} holds, then $\rho$ is a distortion risk measure and we can apply Theorem \ref{special case drm} to obtain the same conclusion as before. In particular, in both cases ${\cal R}_\rho$ is quasi-Hadamard differentiable at $F_0$ tangentially to $C\langle D_{\phi,F_0}\rangle$.
So the claim of Theorem \ref{main theorem coroll} would follow from the Modified Functional Delta-Method given in \cite[Theorem 4.1]{BeutnerZaehle2010} if we can show that condition (c) in Theorem 4.1 in  \cite{BeutnerZaehle2010} holds. The latter condition requires that $\omega'\mapsto{\cal R}_\rho(W(\omega')+F_0)$ is $({\cal F}',{\cal B}(\R))$-measurable whenever $W$ is a measurable map from some measurable space $(\Omega',{\cal F}')$ to $(D_{\phi,F_0},{\cal D}_{\phi,F_0})$ such that $W(\omega')+F_0\in\F_{\cal X}$ for all $\omega'\in\Omega'$. This condition ensures in particular that left-hand side in (\ref{conv of emp process statistics}) is a real-valued random variable.

Since $W$ is $({\cal F}',{\cal D}_{\phi,F_0})$-measurable and ${\cal D}_{\phi,F_0}$ is the projection $\sigma$-field, we obtain in particular $({\cal F}',{\cal B}(\R))$-measurability of $\omega'\mapsto W(x,\omega')$ for every $x\in\R$. Since $x\mapsto W(x,\omega')$ is right-continuous for every $\omega'$, the mapping $(x,\omega')\mapsto W(x,\omega')$ is ${\cal B}(\R)\otimes{\cal F}$-measurable. In particular, the same is true for the mapping $(x,\omega')\mapsto W(x,\omega')+F_0(x)$. In view of
$$
    {\cal R}_g(F)\,=\,\int_{-\infty}^0 g(F(x))\,dx-\int_0^\infty \big(1-g(F(x)\big)\,dx\qquad\mbox{for all }F\in\F_{\cal X},\,g\in{\cal G}_\rho,
$$
we may conclude from Fubini's theorem that the mapping $\omega\mapsto{\cal R}_g(W(\omega')+F_0)$ is $({\cal F}',{\cal B}(\R))$-measurable for every $g\in{\cal G}_\rho$. Moreover, for any $F\in\mathbb{F}_{{\cal X}},$ the mapping $g\mapsto {\cal R}_{g}(F)$ on ${\cal G}_{\rho}$ is lower semicontinuous w.r.t.\ the uniform metric by Theorem \ref{theorem kusuoka}\,(v). Recalling compactness of ${\cal G}_{\rho},$ there is some countable subset ${\cal G}_{0}\subset {\cal G}_{\rho}$ such that  ${\cal R}_{\rho}(F) = \sup_{g\in {\cal G}_{0}} {\cal R}_{g}(F)$ holds for every $F\in\mathbb{F}_{{\cal X}}$. Hence, the mapping $\omega'\mapsto{\cal R}_\rho(W(\omega')+F_0)$ is
$({\cal F}',{\cal B}(\R))$-measurable. This completes the proof.
\end{proof}

\begin{remarknorm}\label{remark on literature}
The preceding theorem is related as follows to previously obtained results in the literature. In the special case of a distortion risk measure associated with a (possibly nonconcave) distortion function $g$ whose right-sided derivative $g'$ is bounded, the result of Theorem \ref{main theorem coroll} also follows from Theorem 2.5 in \cite{BeutnerZaehle2010}. When, on the other hand, $\rho$ is a general law-invariant coherent risk measure and $\widehat F_n$ are the  empirical distribution functions of a
strongly mixing stationary sequences of random variables with exponential decay of the mixing coefficients, the asymptotic distributions of plug-in estimators  have been derived in Theorems 3.1 and 4.1 in \cite{BelomestnyKraetschmer2012} under certain integrability conditions.
{\hspace*{\fill}$\Diamond$\par\bigskip}
\end{remarknorm}

Notice that the subset $C_{0,\phi,F_0}\subset C_{\phi,F_0}$ of all functions $v\in C_{\phi,F_0}$ satisfying $\lim_{x\to\pm\infty}v(x)=0$ is $\|\cdot\|_{\phi}$-separable and ${\cal D}_{\phi,F_0}$-measurable, as required by condition (c) in Theorem \ref{main theorem coroll}. For details see Corollary \ref{monotone separable} in the Appendix \ref{Separability of uniform metric}. Of course, we have $C_{0,\phi,F_0}=C_{\phi,F_0}$ when $\lim_{x\to\pm\infty}\phi(x)=\infty$. The following Examples \ref{example iid data}--\ref{example strongly dependent data} illustrate condition (c) of Theorem \ref{main theorem coroll}, where $\widehat F_n$ will always be the empirical distribution function $\widehat F_n:=\frac{1}{n}\sum_{i=1}^n\eins_{[X_i,\infty)}$ of the first $n$ variables of a sequence $X_1,X_2,\ldots$ of identically distributed random variables on some probability space $(\Omega,{\cal F},\pr)$.

\begin{examplenorm}{\em (Independent data)}\label{example iid data}
Assume that $X_1,X_2,\dots$ are i.i.d.\ with distribution function $F_0$, and let $\phi$ be a weight function. If $\int\phi^2dF_0<\infty$, then Theorem 6.2.1 in \cite{ShorackWellner1986} shows that for the empirical distribution function $\widehat F_n$ of $X_1,\dots,X_n$
$$
    \sqrt{n}(\widehat F_n-F_0)\stackrel{\sf d}{\longrightarrow} B_{F_0}^\circ\qquad\mbox{(in $(D_{\phi,F_0},{\cal D}_{\phi,F_0},\|\cdot\|_\phi)$)},
$$
where $B_{F_0}^\circ$ is an $F_0$-Brownian bridge, i.e.\ a centered Gaussian process with covariance function $\Gamma(y_0,y_1)=F_0(y_0\wedge y_1)(1-F_0(y_0\vee y_1))$. Notice that $B_{F_0}^\circ$ jumps where $F_0$ jumps, and that $\lim_{x\to\pm\infty}B_{F_0}^\circ(x)=0$. Thus, $B_{F_0}^\circ$ takes values only in the $\|\cdot\|_{\phi}$-separable and ${\cal D}_{\phi,F_0}$-measurable subset $C_{0,\phi,F_0}$ of $C_{\phi,F_0}$.
{\hspace*{\fill}$\Diamond$\par\bigskip}
\end{examplenorm}

\begin{examplenorm}{\em (Weakly dependent data)}\label{example weakly dependent data}
Let $(X_i)$ be strictly stationary and $\alpha$-mixing with mixing coefficients satisfying $\alpha(n)=\mathcal{O}(n^{-\theta})$ for some $\theta>1+\sqrt{2}$. Let $F_0$ be the distribution function of the $X_i$, let $\lambda\ge 0$, and set $\phi_\lambda(x):=(1+|x|)^\lambda$. If $F_0$ is continuous and has a finite $\gamma$-moment for some $\gamma>2\theta\lambda/(\theta-1)$, then it can easily be deduced from Theorem 2.2 in \cite{ShaoYu1996} that for the empirical distribution function $\widehat F_n$ of $X_1,\dots,X_n$
$$
    \sqrt{n}(\widehat F_{n}-F_0)\stackrel{\sf d}{\longrightarrow}\widetilde B_{F_0}^\circ\qquad\mbox{(in $(D_{\phi_\lambda,F_0},{\cal D}_{\phi_\lambda,F_0},\|\cdot\|_{\phi_\lambda})$)}
$$
with $\widetilde B_F^\circ$ a continuous
centered Gaussian process with covariance function $\Gamma(y_0,y_1)=F_0(y_0\wedge y_1)(1-F_0(y_0\vee y_1))+\sum_{i=0}^1\sum_{k=2}^{\infty}\covi(\eins_{\{X_1 \le y_i\}}, \eins_{\{X_k \le y_{1-i}\}})$. See also \cite[Section 3.3]{BeutnerZaehle2010}. Notice that $B_{F_0}^\circ$ takes values only in the $\|\cdot\|_{\phi}$-separable and ${\cal D}_{\phi,F_0}$-measurable subset of all continuous functions within $D_{\phi,F_0}$. Also notice that under some mild regularity conditions, strictly stationary GARCH$(p,q)$ processes are $\alpha$-mixing with $\alpha(n)\le c\,\varrho^n$, $n \in\N$, for some constants $c>0$ and $\varrho \in (0,1)$; cf.~\cite{Lindner2008}. Thus, these GARCH processes always satisfy the above mentioned assumption on $(\alpha(n))$. If $(X_i)$ is even $\beta$- or $\rho$-mixing, then the above mixing condition can be relaxed; cf.\ \cite{ArconesYu1994,ShaoYu1996}.
{\hspace*{\fill}$\Diamond$\par\bigskip}
\end{examplenorm}

\begin{examplenorm}{\em (Strongly dependent data)}\label{example strongly dependent data}
Consider the linear process $X_t:=\sum_{s=0}^\infty a_s\varepsilon_{t-s}$, $t\in\N$, where $(\varepsilon_i)_{i\in\Z}$ are i.i.d.\ random variables on some probability space $(\Omega,{\cal F,\pr})$ with zero mean and finite variance, and the coefficients $a_s$ satisfy $\sum_{s=0}^\infty a_s^2<\infty$. Then $X_1,X_2,\dots$ are identically distributed $L^2(\Omega,{\cal F,\pr})$ random variables. Further, let $\lambda\ge 0$ and assume that the following assertions hold:
\begin{itemize}
    \item[(i)] $a_s=s^{-\beta}\,\ell(s)$, $s\in\N$, where $\beta\in(\frac{1}{2},1)$ and $\ell$ is slowly varying at infinity.
    \item[(ii)] $\ex[|\varepsilon_0|^{2+2\lambda}]<\infty$.
    \item[(iii)] The distribution function $G$ of $\varepsilon_0$ is twice differentiable and satisfies the integrability condition $\sum_{j=1}^2\int |G^{(j)}(x)|^2\phi_{2\lambda}(x)\,dx<\infty$.
\end{itemize}
Under these conditions the covariances $\covi(X_1,X_t)$ are {\em not} summable over $t\in\N$ and thus the process exhibits strong dependence (long-memory). For instance, the infinite moving average representation of an ARFIMA$(p,d,q)$ process with fractional difference parameter $d\in(0,1/2)$ satisfies assumption (a) with $\beta=1-d$; see, for instance, \cite[Section 3]{Hosking1981}. It is shown in \cite[Theorem 2.1]{BeutnerWuZaehle2012} that for the distribution function $F_0$ of the $X_i$, the empirical distribution function $\widehat F_n$ of $X_1,\dots,X_n$ and $\phi_\lambda(x):=(1+|x|)^\lambda$
$$
    n^{\beta-1/2}\,\ell(n)^{-1}\big(\widehat F_n(\cdot)-F_0(\cdot)\big)\,\stackrel{\sf d}{\longrightarrow}\,c_{1,\beta}\, f_0(\cdot) Z\qquad\mbox{(in $(D_{\phi_\lambda,F_0},{\cal D}_{\phi_\lambda,F_0},\|\cdot\|_{\phi_\lambda})$)},
$$
where $f_0$ is the Lebesgue density of $F_0$, $Z$ is a standard normally distributed random variable, and  $c_{1,\beta}:=\{\ex[\varepsilon_0^2](1-(\beta-\frac{1}{2}))(1-(2\beta-1))/(\int_0^\infty(x+x^2)^{-\beta}dx)\}^{1/2}$. Notice that condition (iii) ensures that the distribution function $F_0$ of $X_1$ is differentiable with derivative $f_0\in D_{\phi_\lambda}$; cf.\ inequality (30) in \cite{Wu2003} with $n=\infty$, $\kappa=1$ and $\gamma=2\lambda$. Also notice that the limiting process $c_{1,\beta}f_0(\cdot)Z$ takes values only in the $\|\cdot\|_{\phi}$-separable and ${\cal D}_{\phi,F_0}$-measurable subset of all continuous functions within $D_{\phi,F_0}$.
{\hspace*{\fill}$\Diamond$\par\bigskip}
\end{examplenorm}


\subsection{Strong laws for plug-in estimators}\label{strong laws}

The following theorem generalizes the result of Section 3.2 in \cite{Zaehle2014}.

\begin{theorem}\label{main theorem coroll 2}
Suppose that Assumptions \ref{assumption for kusuoka} and \ref{assumption for qhd} hold, or that Assumption \ref{assumption for qhd - distortion} holds. Let $(\Omega,{\cal F},\pr)$ be a probability space, $\widehat F_n:\Omega\to D$ be a map for every $n\in\N$, and assume that the following conditions hold:
\begin{itemize}
    \item[(a)] $\widehat F_n$ takes values only in $\F_{\cal X}$ and is $({\cal F},{\cal D})$ measurable, $n\in\N$.
    \item[(b)] $\widehat F_n-F_0$ takes values only in $D_{\phi,F_0}$, $n\in\N$.
    \item[(c)] There is some nondecreasing sequence $(r_n)\subset(0,\infty)$ such that
    \begin{equation}\label{condition on empirical difference - 2}
        r_n\|\widehat F_n-F_0\|_\phi\,\longrightarrow\,0\qquad\pr\mbox{-a.s.}
    \end{equation}
\end{itemize}
Then
\begin{equation}\label{empirical difference - conclusion - 2}
    r_n({\cal R}_\rho(\widehat F_n)-{\cal R}_\rho(F_0))\,\longrightarrow\,0\qquad\pr\mbox{-a.s.}
\end{equation}
\end{theorem}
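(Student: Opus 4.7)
The plan is to derive (\ref{empirical difference - conclusion - 2}) from the quasi-Hadamard differentiability of ${\cal R}_\rho$ provided by Theorem \ref{theorem qhd of r} (under Assumptions \ref{assumption for kusuoka} and \ref{assumption for qhd}) or by Theorem \ref{special case drm} (under Assumption \ref{assumption for qhd - distortion}), applied pathwise at the null tangent direction. The key observation that drives the argument is that plugging $v\equiv 0$ into (\ref{gh ableitung}) (respectively (\ref{special case drm - eq})) yields
$$
\dot{\cal R}_{\rho,F_0}(0)\,=\,0,
$$
since the integrand vanishes for every admissible $g$. Measurability of $\omega\mapsto r_n({\cal R}_\rho(\widehat F_n(\omega)) - {\cal R}_\rho(F_0))$ follows from exactly the same Fubini-plus-compactness argument used in the proof of Theorem \ref{main theorem coroll}, so the $\pr$-a.s.\ statement (\ref{empirical difference - conclusion - 2}) is meaningful to begin with.

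For the main step I would fix $\omega$ in the $\pr$-full set where $r_n\|\widehat F_n(\omega)-F_0\|_\phi\to 0$, assume first that $r_n\uparrow\infty$, and set
$$
h_n\,:=\,1/r_n,\qquad v_n\,:=\,r_n\big(\widehat F_n(\omega)-F_0\big),\qquad v\,:=\,0\in C_{\phi,F_0}.
$$
Then $v_n\in D_{\phi,F_0}$ by condition (b); $F_0+h_nv_n=\widehat F_n(\omega)\in\F_{\cal X}$ by condition (a); $\|v_n-v\|_\phi = r_n\|\widehat F_n(\omega)-F_0\|_\phi\to 0$ by condition (c); and $h_n\downarrow 0$ by the working assumption $r_n\uparrow\infty$. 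The definition of quasi-Hadamard differentiability recalled in Section \ref{Main result} just above Theorem \ref{theorem qhd of r} then produces
$$
r_n\big({\cal R}_\rho(\widehat F_n(\omega))-{\cal R}_\rho(F_0)\big)\,=\,\frac{{\cal R}_\rho(F_0+h_nv_n)-{\cal R}_\rho(F_0)}{h_n}\,\longrightarrow\,\dot{\cal R}_{\rho,F_0}(0)\,=\,0,
$$
which is the desired conclusion for this $\omega$.

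To remove the auxiliary assumption that $r_n\to\infty$, I would proceed as follows: if $(r_n)$ is bounded, condition (c) still forces $\|\widehat F_n(\omega)-F_0\|_\phi\to 0$ pathwise, so I can majorize $r_n$ by a pathwise sequence $\tilde r_n(\omega)\uparrow\infty$ which still satisfies $\tilde r_n(\omega)\|\widehat F_n(\omega)-F_0\|_\phi\to 0$ (for instance, set $\tilde r_n(\omega):=\max(r_n,\min(\|\widehat F_n(\omega)-F_0\|_\phi^{-1/2},n))$ where the norm is nonzero and $\tilde r_n(\omega):=n$ otherwise). The previous paragraph applied to $\tilde r_n$, together with $0<r_n\le\tilde r_n(\omega)$, then yields the claim. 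I do not anticipate any substantive obstacle: the content of the proof is simply that hypothesis (c) is calibrated precisely so that $r_n(\widehat F_n-F_0)$ approaches the null element of $C_{\phi,F_0}$ in $\|\cdot\|_\phi$, which is exactly the input for which the quasi-Hadamard derivative converts an assumed a.s.\ rate into a matching a.s.\ rate for the functional, here automatically zero.
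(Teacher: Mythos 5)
Your proof is correct, and it rests on the same two pillars as the paper's argument: the quasi-Hadamard differentiability of ${\cal R}_\rho$ at $F_0$ supplied by Theorem \ref{theorem qhd of r} (or Theorem \ref{special case drm}), and the fact that the derivative vanishes at the null direction $0\in C_{\phi,F_0}$. The difference is one of packaging. The paper does not instantiate the definition of quasi-Hadamard differentiability directly; it invokes Lemma \ref{HC implies HD} to upgrade differentiability tangentially to $\{0_\V\}\langle D_{\phi,F_0}\rangle$ into quasi-Lipschitz continuity at $F_0$ along $D_{\phi,F_0}$ in the sense of Definition \ref{definition quasi continuity}, i.e.\ the pathwise bound $|{\cal R}_\rho(\widehat F_n)-{\cal R}_\rho(F_0)|={\cal O}(\|\widehat F_n-F_0\|_\phi)$, from which (\ref{empirical difference - conclusion - 2}) follows by multiplying by $r_n$ with no case distinction on $(r_n)$ at all. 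You instead plug the triplet $\big(0,(r_n(\widehat F_n(\omega)-F_0)),(1/r_n)\big)$ into the definition, which only works when $r_n\uparrow\infty$, and you then repair the bounded case by inflating $r_n$ to a pathwise rate $\tilde r_n(\omega)\to\infty$ that still satisfies $\tilde r_n\|\widehat F_n(\omega)-F_0\|_\phi\to 0$. That repair is sound, and it is essentially the same renormalization device that drives the proof of Lemma \ref{equiv QC} in the appendix; you have simply inlined that lemma. The paper's route buys a cleaner, reusable Lipschitz-type rate at the cost of the abstract appendix machinery; yours is self-contained but needs the extra bookkeeping for bounded or slowly growing $(r_n)$. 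Both arguments also require the measurability of $\omega\mapsto{\cal R}_\rho(\widehat F_n(\omega))$, which you correctly delegate to the argument in the proof of Theorem \ref{main theorem coroll}, exactly as the paper does.
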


\begin{proof}
Notice that all involved expressions are measurable; cf.\ the proof of Theorem \ref{main theorem coroll}. Assumptions \ref{assumption for kusuoka} and \ref{assumption for qhd} (or Assumption \ref{assumption for qhd - distortion}) and Theorem \ref{theorem qhd of r} ensure that ${\cal R}_\rho$ is quasi-Hadamard differentiable at $F_0$ tangentially to $C_{\phi,F_0}\langle D_{\phi,F_0}\rangle$. By Lemma \ref{HC implies HD}, we may conclude that ${\cal R}_\rho$ is also quasi-Lipschitz continuous at $F_0$ along $D_{\phi,F_0}$ in the sense of Definition \ref{definition quasi continuity}. Thus, (\ref{condition on empirical difference - 2}) obviously implies (\ref{empirical difference - conclusion - 2}).
\end{proof}

The following Examples \ref{generalized GC - independent - mz}--\ref{generalized GC - mixing - mz} illustrate condition (c) of Theorem \ref{main theorem coroll 2}, where $\widehat F_n$ will always be the empirical distribution function $\widehat F_n:=\frac{1}{n}\sum_{i=1}^n\eins_{[X_i,\infty)}$ of the first $n$ variables of a sequence $X_1,X_2,\ldots$ of identically distributed random variables on some probability space $(\Omega,{\cal F},\pr)$.

\begin{examplenorm}{\em (Independent data)}\label{generalized GC - independent - mz}
Assume that $X_1,X_2,\dots$ are i.i.d.\ with distribution function $F_0$. Let $\phi$ be any weight function, and $r\in[0,\frac{1}{2})$. If the sequence $(X_i)$ is i.i.d.\ and $\int\phi^{1/(1-r)}dF<\infty$, then (\ref{condition on empirical difference - 2}) holds for $r_n=n^r$. This is an immediate consequence of Theorem 7.3 in \cite{AndersenGineZinn1988}; cf.\ Theorem 2.1 in \cite{Zaehle2014}.
{\hspace*{\fill}$\Diamond$\par\bigskip}
\end{examplenorm}

\begin{examplenorm}{\em (Weakly dependent data)}\label{generalized GC - mixing - mz}
Suppose that $\int\phi\,dF<\infty$. Further suppose that $(X_i)$ is $\alpha$-mixing with mixing coefficients $\alpha(n)$, let $\alpha(t):=\alpha(\lfloor t\rfloor)$ be the \cadlag\ extension of $\alpha(\cdot)$ from $\N$ to $\R_+$, and assume that $\int_0^1 \log\big(1+\alpha^\rightarrow(s/2)\big)\,\overline{G}\,^\rightarrow(s)\,ds<\infty$
for $\overline G:=1-G$, where $G$ denotes the df of $\phi(X_1)$ and $\overline{G}\,^\rightarrow$ the right-continuous inverse of $\overline{G}$. It was shown in \cite[Theorem 2.3]{Zaehle2014} that, under the imposed assumptions, (\ref{condition on empirical difference - 2}) holds for $r_n=1$. Notice that the integrability condition above holds in particular if $\ex[\phi(X_1)\log^+\phi(X_1)]<\infty$ and $\alpha(n)={\cal O}(n^{-\vartheta})$ for some arbitrarily small $\vartheta>0$; cf.\ \cite[Application 5, p.\,924]{Rio1994}.

Suppose that the sequence $(X_i)$ is $\alpha$-mixing with mixing coefficients $\alpha(n)$. Let $r\in[0,\frac{1}{2})$ and assume that $\alpha(n)\le K n^{-\vartheta}$ for all $n\in\N$ and some constants $K>0$ and $\vartheta>2r$. Then (\ref{condition on empirical difference - 2}) holds for $\phi\equiv 1$ and $r_n=n^r$; cf.\ \cite[Theorem 2.2]{Zaehle2014}.
{\hspace*{\fill}$\Diamond$\par\bigskip}
\end{examplenorm}


\section{Proofs}\label{Proof of QHD of R}


\subsection{Proof of Theorem \ref{theorem kusuoka}}\label{proof of theorem kusuoka}

Assertions (i), (ii) and (iv) are known from  Proposition 5.1 in \cite{BelomestnyKraetschmer2012}; see also independent proof of (ii) and (iv) in \cite{KraetschmerZaehle2011}. Moreover it follows from the last calculation in Section 4.3 of \cite{KraetschmerZaehle2011} that $|g(s)-g(t)|\le g_\rho(|s-t|)$ for every $g\in {\cal G}_\rho$ and $s,t\in [0,1]$. This implies assertion (iii), because all elements of ${\cal G}_{\rho}$ are concave.
Thus it remains to show assertion (v).

For this purpose, let $(g_{k})$ denote any sequence in ${\cal G}_{\rho}$ which converges to some $g\in{\cal G}_{\rho}$ w.r.t.\ the uniform metric, in particular
$g_{k}(F_{0}(x))\to g(F_{0}(x))$ and therefore also $1 - g_{k}(F_{0}(x))\to 1 - g_{F_{0}}(x)$ for every $x\in\R.$ Then by Fatou's lemma
\begin{equation}
\label{negativachse}
\liminf_{k\to\infty}\int_{-\infty}^{0}g_{k}(F_{0}(x))\,dx\geq \int_{-\infty}^{0}g(F_{0}(x))\,dx.
\end{equation}
Furthermore, for any $x > F_{0}^{\leftarrow}(1/2)^{+},$ concavity of $g_{k}$ along with (iii) implies
$$
1 - g_{k}(F_{0}(x))\leq (1- F_{0}(x)) g_{k}'(F_{0}(x))\leq
(1- F_{0}(x)) g_{k}'(1/2)\leq (1- F_{0}(x))\,4\,g_{\rho}(1/4).
$$
This means
\begin{equation}
\label{positivachse}
|(1 - g_{k}(F_{0}))\eins_{[0,\infty)}|\leq
(1 - g_{k}(F_{0}))\eins_{[0,F_{0}^{\leftarrow}(1/2)^{+}]} +
4 g_{\rho}(1/4) (1- F_{0}) \eins_{(F_{0}^{\leftarrow}(1/2)^{+},\infty)}
\end{equation}
Recall that $F_{0}$ is the distribution function of some $\pr$-integrable random variable so that $\int_{0}^{\infty}(1 - F_{0}(x))\,dx < \infty.$ Since in addition $(1 - g_{k}(F_{0}))\eins_{[0,F_{0}^{\leftarrow}(1/2)^{+}]}$ is bounded, it follows that the right-hand side of (\ref{positivachse}) 
is integrable w.r.t.\ the Lebesgue measure on $\R$. Hence in view of (\ref{positivachse}), the application of the Dominated Convergence Theorem yields
\begin{equation}\label{konvergenzpositivachse}
 \lim_{k\to\infty}\int_{0}^{\infty} (1 - g_{k}(F_{0}(x)))\,dx =
 \int_{0}^{\infty}(1 - g(F_{0}(x)))\,dx.
 \end{equation}
Combining (\ref{negativachse}) and (\ref{konvergenzpositivachse}), we may conclude $\liminf_{k\to\infty}{\cal R}_{g_{k}}(F_{0}) \geq {\cal R}_{g}(F_{0})$.

Let us now suppose that in addition $\int_{-\infty}^{0} g_{\rho}(\gamma F_{0}(x))\,dx < \infty$ holds for some $\gamma\in (0,1).$ By concavity of $g_{k}$ along with (ii), we obtain for $x < 0$
$$
g_{k}(x)\leq\frac{g_{k}(\gamma F_{0}(x))}{\gamma}\leq \frac{g_{\rho}(\gamma F_{0}(x))}{\gamma}.
$$
Then we may have $\lim_{k\to\infty}\int_{-\infty}^{0}g_{k}(F_{0}(x))\,dx = \int_{-\infty}^{0}g(F_{0}(x))\,dx$ by the Dominated Convergence Theorem. Due to
(\ref{konvergenzpositivachse}), this implies $
\lim_{k\to\infty}{\cal R}_{g_{k}}(F_{0}) = {\cal R}_{g}(F_{0}),
$
completing the proof.


\subsection{Auxiliary lemma}\label{Auxiliary results}

\begin{lemma}\label{robustrepresentationcontinuous}
Under the Assumptions \ref{assumption for kusuoka} and \ref{assumption for qhd}, the following assertions hold:
\begin{itemize}
    \item[(i)] For every $v\in C_{\phi,F_0}$, the mapping ${\cal G}_{\rho}\rightarrow\R$, $g\mapsto\int_{F^\rightarrow(0)}^{F^\leftarrow(1)} g'(F(x))\,v(x)\,dx$ is continuous w.r.t.\ the uniform metric on ${\cal G}_{\rho}$.
    \item[(ii)] The mapping $D_{\phi,F_0}\to\ell^\infty({\cal G}_\rho)$, $v\mapsto(\int_{F_0^\rightarrow(0)}^{F_0^\leftarrow(1)} g'(F_0(x))\,v(x)\,dx)_{g\in{\cal G}_{\rho}}$ is continuous w.r.t.\ $\|\cdot\|_\phi$, where $\ell^\infty({\cal G}_\rho)$ is the space of all bounded real-valued functions on ${\cal G}_\rho$ equipped with the sup-norm.
\end{itemize}
\end{lemma}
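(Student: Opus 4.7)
The plan is to deduce both assertions from a single uniform pointwise bound and then apply, respectively, dominated convergence in (i) and boundedness of a linear operator in (ii). The key estimate comes from Theorem \ref{theorem kusuoka}\,(iii), which yields $g'(t)\le g_\rho(\gamma t)/(\gamma t)$ for every $g\in{\cal G}_\rho$ and every $\gamma,t\in(0,1)$. Combined with $|v(x)|\le\|v\|_\phi/\phi(x)$, this gives for every $g\in{\cal G}_\rho$
\[
    |g'(F_0(x))\,v(x)|\,\le\,\|v\|_\phi\,\frac{g_\rho(\gamma F_0(x))}{\gamma F_0(x)\,\phi(x)},\qquad x\in(F_0^\rightarrow(0),F_0^\leftarrow(1)),
\]
and the right-hand side is Lebesgue-integrable on $(F_0^\rightarrow(0),F_0^\leftarrow(1))$ by Assumption \ref{assumption for qhd}\,(b), with integral independent of $g$.

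For part (ii) this is already enough. Setting $I_g(v):=\int_{F_0^\rightarrow(0)}^{F_0^\leftarrow(1)} g'(F_0(x))\,v(x)\,dx$, the map $v\mapsto (I_g(v))_{g\in{\cal G}_\rho}$ is linear in $v$, and integrating the pointwise bound yields $\sup_{g\in{\cal G}_\rho}|I_g(v)|\le C\|v\|_\phi$ with the finite constant $C:=\gamma^{-1}\int_{F_0^\rightarrow(0)}^{F_0^\leftarrow(1)} g_\rho(\gamma F_0(x))/(F_0(x)\phi(x))\,dx$ depending only on $F_0$, $\phi$, and $\gamma$. Hence the image lies in $\ell^\infty({\cal G}_\rho)$ and the linear operator has norm at most $C$, so it is continuous w.r.t.\ $\|\cdot\|_\phi$.

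For part (i), let $(g_k)\subset{\cal G}_\rho$ converge uniformly to some $g\in{\cal G}_\rho$. Because all $g_k$ and $g$ are concave on $[0,1]$, a standard argument (passing to the limit in the difference quotient defining the right-sided derivative of $g_k$ and then shrinking the increment) shows that $g_k'(t)\to g'(t)$ at every $t\in(0,1)$ at which $g$ is differentiable, and the concave function $g$ is differentiable outside an at most countable set $S\subset(0,1)$. By Assumption \ref{assumption for qhd}\,(a), on each connected component of $(F_0^\rightarrow(0),F_0^\leftarrow(1))\setminus D(F_0)$ the function $F_0$ is a $C^1$-diffeomorphism with strictly positive derivative, so the preimage of $S$ under each such restriction is countable and therefore Lebesgue-null; adjoining the finite set $D(F_0)$ keeps the resulting set null. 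Hence $g_k'(F_0(x))\,v(x)\to g'(F_0(x))\,v(x)$ for Lebesgue-a.e.\ $x\in(F_0^\rightarrow(0),F_0^\leftarrow(1))$, and the dominated convergence theorem combined with the bound above yields $I_{g_k}(v)\to I_g(v)$, which is (i).

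The only step demanding real care is verifying that $F_0^{-1}(S)$ is a Lebesgue-null subset of $(F_0^\rightarrow(0),F_0^\leftarrow(1))$; this is where Assumption \ref{assumption for qhd}\,(a) is used essentially, through the strict positivity of the derivative of $F_0$ off a finite set. Once this and the uniform bound on $g'$ from Theorem \ref{theorem kusuoka}\,(iii) are in place, the remainder of the proof amounts to routine applications of the definition of $\|\cdot\|_\phi$, bounded linearity, and dominated convergence.
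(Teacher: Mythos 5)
Your proposal is correct and follows essentially the same route as the paper: the uniform bound $\sup_{g\in{\cal G}_\rho}g'(t)\le g_\rho(\gamma t)/(\gamma t)$ from Theorem \ref{theorem kusuoka}\,(iii) combined with $|v|\le\|v\|_\phi/\phi$ gives the integrable majorant, part (ii) follows by integrating this bound against $|v_k-v|$ (your bounded-linear-operator phrasing is the same estimate), and part (i) follows from the classical fact that derivatives of concave functions converge at differentiability points of the limit, with the exceptional countable set pulled back to a Lebesgue-null set via the injectivity of $F_0$ on $(F_0^\rightarrow(0),F_0^\leftarrow(1))$ guaranteed by Assumption \ref{assumption for qhd}\,(a), and then dominated convergence. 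The paper merely spells out the $\liminf$/$\limsup$ argument for the difference quotients that you summarize as "standard"; there is no substantive difference.
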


\begin{proof}
We will first prove assertion (i). 
Let $(g_{k})$ be any sequence in ${\cal G}_{\rho}$ which converges to some $g\in{\cal G}_{\rho}$ w.r.t.\ the uniform metric. In view of the Dominated Convergence Theorem, it suffices to show 1) that $g_k'(F_0(x))\,v(x)$ converges to $g_k'(F_0(x))\,v(x)$ for Lebesgue a.e.\ $x\in(F_0^{\rightarrow}(0),F_0^{\leftarrow}(1))$, and 2) that there is some Lebesgue integrable majorant for the sequence $(g_k'(F_0(\cdot))\,v(\cdot))$.

To verify condition 1), let us first fix any $t\in (0,1)$. Then, on the one hand, we may find for every $\varepsilon > 0$ some $\delta\in (0,1-t)$ such that
$$
    g'(t)-\varepsilon\,<\,\frac{g(t + \delta) - g(t)}{\delta}\,=\,\liminf_{k\to\infty}\frac{g_{k}(t + \delta) - g_{k}(t)}{\delta}\,\le\,\liminf_{k\to\infty}g_{k}'(t),
$$
which means that
\begin{equation}\label{eins}
    g'(t)\,\le\,\liminf\limits_{k\to\infty}g_{k}'(t).
\end{equation}
On the other hand, we have for every $\delta\in (0,t)$
$$
    \limsup_{k\to\infty}g_{k}'(t)\,\le\,\limsup_{k\to\infty}\frac{g_{k}(t) - g_{k}(t - \delta)}{\delta}\,=\,\frac{g(t) - g(t - \delta)}{\delta}.
$$
This implies
\begin{equation}\label{zwei}
    \limsup_{k\to\infty}g_{k}'(t)\,\le\,\lim_{\delta\to 0_{+}} \frac{g(t) - g(t - \delta)}{\delta} = g_{-}'(t),
\end{equation}
where $g_{-}'(t)$ denotes the left-sided derivative of $g$ at $t.$ Combining (\ref{eins}) and (\ref{zwei}), we obtain
\begin{equation}\label{drei}
    g'(F_{0}(x))\,=\,\lim_{k\to\infty} g_{k}'(F_{0}(x))\qquad\mbox{for all }x\in \R\setminus A,
\end{equation}
where $A$ denotes the set consisting of all $x\in(F_0^{\rightarrow}(0),F_0^{\leftarrow}(1))$  for which $g_{-}'(F_0(x))$ differs from $g'(F_0(x))$. Notice that $A$ is at most countable due to monotonicity of the left- and the right-sided derivative functions of $g$. Since by assumption $F_0$ is strictly increasing on  $(F^{\rightarrow}(0),F^{\leftarrow}(1))$, this implies $g'(F_0(x))=\lim_k g_k'(F_0(x))$ for all but countably many $x$ in  $(F^{\rightarrow}(0),F^{\leftarrow}(1))$.

To verify condition 2), let $\gamma\in (0,1)$ be as in the integrability condition (b) from Assumption \ref{assumption for qhd}. Then, in view of assertion (iii) from Theorem \ref{theorem kusuoka}, we may conclude
\begin{equation}\label{vier}
    \sup_{k\in\mathbb{N}}|g_{k}'(F_{0}(x)) v(x)|\, \le\, \frac{\sup_{k\in\mathbb{N}}|g_{k}'(F_{0}(x))|\, \|v\|_{\phi}}{\phi(x)}\, \le\,\frac{g_{\rho}(\gamma F_0(x))}{F_0(x)\phi(x)}\,\frac{\|v\|_{\phi}}{\gamma}
\end{equation}
for every $x\in (F_0^{\rightarrow}(0),F_0^{\leftarrow}(1)).$  The integrability condition (b) from Assumption \ref{assumption for qhd} ensures that the right-hand side in (\ref{vier}) is Lebesgue integrable over $(F^{\rightarrow}(0),F^{\leftarrow}(1))$.

Now, we will prove assertion (ii). Let $(v_{k})$ be any sequence in $D_{\phi,F_0}$ which converges to some $v\in D_{\phi,F_0}$ w.r.t.\ $\|\cdot\|_\phi$. We clearly have
\begin{eqnarray*}
    & & \sup_{g\in{\cal G}_\rho}\Big|\int_{F_0^\rightarrow(0)}^{F_0^\leftarrow(1)} g'(F(x))\,v_k(x)\,dx-\int_{F_0^\rightarrow(0)}^{F_0^\leftarrow(1)}g'(F_0(x))\,v(x)\,dx\Big|\\
    & \le & \int_{F_0^\rightarrow(0)}^{F_0^\leftarrow(1)} \sup_{g\in{\cal G}_\rho}\,g'(F(x))|v_k(x)-v(x)|\,dx\,\\
    & \le & \int_{F_0^\rightarrow(0)}^{F_0^\leftarrow(1)} \frac{g_{\rho}(\gamma F(x))}{\gamma F(x)\phi(x)}\,dx\,\|v_k-v\|_\phi,
\end{eqnarray*}
and the integrability condition (b) from Assumption \ref{assumption for qhd} ensures that the latter expression converges to $0$ as $k\to\infty$. The proof is now complete.
\end{proof}


\subsection{Proof of Theorem \ref{theorem qhd of r}}\label{ersterHauptbeweis}

The statistical functional ${\cal R}_\rho:\F_{\cal X}\to\R$ can be represented as composition
$$
    {\cal R}_\rho\,=\,{\cal S}_{\rho}\circ{\cal T}_{\rho}
$$
with ${\cal T}_{\rho}:\F_{\cal X}\to\ell^\infty({\cal G}_\rho)$ and ${\cal S}_{\rho}:\ell^\infty({\cal G}_\rho)\to\R$ given by
$$
    {\cal T}_{\rho}(F)\,:=\,\big({\cal R}_g(F)\big)_{g\in{\cal G}_\rho}
$$
and
$$
    {\cal S}_{\rho}\big((x_g)_{g\in{\cal G}_\rho}\big)\,:=\,\sup_{g\in{\cal G}_\rho}x_g,
$$
respectively, where $\ell^\infty({\cal G}_\rho)$ is the space of all bounded real-valued functions on ${\cal G}_\rho$ equipped with the sup-norm. In the following we will show that the functional ${\cal T}_{\rho}$ is quasi-Hadamard differentiable at $F_0$ tangentially to $D_{\phi,F_0}\langle D_{\phi,F_0}\rangle$ with quasi-Hadamard derivative $\dot {\cal T}_{\rho,F_0}$ given by
\begin{equation}\label{def of qh ableitung von T}
    \dot{\cal T}_{\rho,F_0}(v)\,:=\,\Big(\int_{F_0^\rightarrow(0)}^{F_0^\leftarrow(1)} g'(F_0(x))\,v(x)\,dx\Big)_{g\in{\cal G}_\rho}\,,\qquad v\in C_{\phi,F_0}.
\end{equation}
This is sufficient for the proof of Theorem \ref{theorem qhd of r}. Indeed, it is known from \cite[Proposition 1]{Roemisch2006} that the mapping ${\cal S}_{\rho}$ is Hadamard differentiable at every $(x_g)_{g\in{\cal G}_\rho}$ (tangentially to the whole space $\ell^\infty({\cal G}_\rho)$) with (possibly nolinear) Hadamard derivative $\dot{\cal S}_{\rho,(x_g)_{g\in{\cal G}_\rho}}$ given by
$$
    \dot{\cal S}_{\rho,(x_g)_{g\in{\cal G}_\rho}}\big((w_g)_{g\in{\cal G}_\rho}\big)\,:=\,\lim_{\varepsilon\downarrow 0}\,\sup_{g\in{\cal G}_\rho((x_g)_{g\in{\cal G}_\rho},\varepsilon)} w_g,\qquad (w_g)_{g\in{\cal G}_\rho}\in\ell^\infty({\cal G}_\rho),
$$
where ${\cal G}_{\rho}((x_g)_{g\in{\cal G}_\rho}, \varepsilon)$ denotes the set of all $g\in {\cal G}_{\rho}$ satisfying $\sup_{h\in {\cal G_{\rho}}} x_{h} - \varepsilon\leq x_{g}$. Moreover, the restriction of $F_{0}$ to $(F_{0}^{\rightarrow}(0),F_{0}^{\leftarrow}(1))$ is injective by assumption. Therefore, in view of (i) in Theorem \ref{theorem kusuoka}, we obtain for $v\in C_{\phi,F_{0}}$
\begin{eqnarray*}
    & & \dot{\cal S}_{\rho,({\cal R}_{g}(F_{0}))_{g\in{\cal G}_\rho}}\Big(\Big(\int_{F_{0}^{\rightarrow}(0)}^{F_{0}^{\leftarrow}(1)}g'(F_{0}(x)) v(x)\, dx\Big)_{g\in{\cal G}_\rho}\Big)
     = \lim_{\varepsilon\downarrow 0}\,\sup_{g\in{\cal G}_\rho(F_0,\varepsilon)}\int_{F_{0}^{\rightarrow}(0)}^{F_{0}^{\leftarrow}(1)}g'(F_{0}(x)) v(x)\, dx
\end{eqnarray*}
If in addition $\int_{-\infty}^{0}g_{\rho}(\delta F_0(x))\,dx < \infty$ holds for some $\delta\in (0,1)$, then by parts (i) and (v) of Theorem \ref{theorem kusuoka}, the set ${\cal G}_\rho(F_{0})$ is nonempty, and
$$
    \dot{\cal S}_{\rho,({\cal R}_{g}(F_{0}))_{g\in{\cal G}_\rho}}\Big(\Big(\int_{F_{0}^{\rightarrow}(0)}^{F_{0}^{\leftarrow}(1)}g'(F_{0}(x)) v(x)\, dx\Big)_{g\in{\cal G}_\rho}\Big)\,=\,\sup_{g\in{\cal G}_\rho(F_{0})} \int_{F_{0}^{\rightarrow}(0)}^{F_{0}^{\leftarrow}(1)}g'(F_{0}(x)) v(x)\, dx
$$
for every $v\in C_{\phi,F_{0}}.$ Hence the full claim of Theorem \ref{theorem qhd of r} follows from the chain rule in Lemma \ref{lemma chain rule}.

We are now going to establish the above mentioned quasi-Hadamard differentiability of ${\cal T}_{\rho}$. First of all notice that the map $\dot{\cal T}_{\rho,F_0}$ defined in (\ref{def of qh ableitung von T}) is continuous w.r.t.\ $\|\cdot\|_\phi$ by part (ii) of Lemma \ref{robustrepresentationcontinuous}. Now, let $(v,(v_n),(h_n))$ be a triplet with $v\in C_{\phi,F_0}$, $(v_n)\subset D_{\phi,F_0}$ satisfying $\|v_n-v\|_{\phi}\to 0$ and $(F_0+h_nv_n)\subset\F_{\cal X}$, and $(h_n)\subset(0,\infty)$ satisfying $h_n\to 0$. We have to show that
$$
    \lim_{n\to\infty}\,\Big\|\frac{{\cal T}_\rho(F_0+h_nv_n)-{\cal T}_\rho(F_0)}{h_n}-\dot{\cal T}_{\rho,F_0}(v)\Big\|_\infty\,=\,0,
$$
that is,
\begin{equation}\label{proof qhd of t - 10}
    \lim_{n\to\infty}\,\sup_{g\in{\cal G}_\rho}\,\Big|\frac{{\cal R}_g(F_0+h_nv_n)-{\cal R}_g(F_0)}{h_n}-\int_{F_0^\rightarrow(0)}^{F_0^\leftarrow(1)} g'(F_0(x))\,v(x)\,dx\Big|\,=\,0
\end{equation}
or, in other words,
\begin{equation}\label{proof qhd of t - 15}
    \lim_{n\to\infty}\,\sup_{g\in{\cal G}_\rho}\,\Big|\int_{F_0^\rightarrow(0)}^{F_0^\leftarrow(1)}\frac{g\big((F_0+h_nv_n)(x)\big)-g\big(F_0(x)\big)}{h_n}\,dx-\int_{F_0^\rightarrow(0)}^{F_0^\leftarrow(1)} g'(F_0(x))\,v(x)\,dx\Big|\,=\,0.
\end{equation}
By Assumption \ref{assumption for qhd}\,(b), there exist $r\in\N_0$ and  $F_0^{\rightarrow}(0)=:a_0<a_1<\cdots<a_{r+1}:=F_0^{\leftarrow}(1)$ such that for every $i=1,\ldots,r$ the restriction $F_0|_{(a_{i-1},a_i)}$ of $F_0$ to $(a_{i-1},a_i)$ is continuously differentiable with strictly positive derivative. For every $i=0,\ldots,r$, we consider the extension $F_{0,i}:=F_{0}\eins_{[a_{i},a_{i+1})}+\eins_{[a_{i+1},\infty)}$ of $F_0|_{(a_{i},a_{i+1})}$ from $(a_{i},a_{i+1})$ to $\R$. This extension $F_{0,i}$ is contained in $\F_{\cal X}$. Indeed, if $X_0$ is any random variable from ${\cal X}$ with distribution function $F_{0}$, then $X_{0,i}:=a_{i}\vee (X_0\wedge a_{i+1})$ belongs to ${\cal X}$ (recall that ${\cal X}$ is a Stonean vector lattice) and $F_{0,i}$ is the distribution function of $X_{0,i}$. Further, for every $g\in{\cal G}_\rho$ we have that
\begin{eqnarray*}
    \lefteqn{\Big|\int_{F_0^\rightarrow(0)}^{F_0^\leftarrow(1)}\frac{g\big((F_0+h_nv_n)(x)\big)-g\big(F_0(x)\big)}{h_n}\,dx-\int_{F_0^\rightarrow(0)}^{F_0^\leftarrow(1)} g'(F_0(x))\,v(x)\,dx\Big|}\\
    & \le & \sum_{i=0}^{r}\Big|\int_{F_{0,i}^{\rightarrow}(0)}^{F_{0,i}^{\leftarrow}(1)}\frac{g\big((F_{0,i}+h_n v_{n,i})(x)\big)-g\big(F_{0,i}(x)\big)}{h_n}\,dx-\int_{F_{0,i}^\rightarrow(0)}^{F_{0,i}^\leftarrow(1)}g'(F_{0,i}(x))\,v_{i}(x)\,dx\Big|,
\end{eqnarray*}
with $v_{n,i}:=v_n\eins_{[a_i,a_{i+1})}$ and $v_{i}:=v\eins_{[a_i,a_{i+1})}$. Thus, for (\ref{proof qhd of t - 15}) it suffices to show that
$$
    \lim_{n\to\infty}\,\sup_{g\in{\cal G}_\rho}\,\Big|\int_{F_{0,i}^\rightarrow(0)}^{F_{0,i}^\leftarrow(1)}\frac{g\big((F_{0,i}+h_nv_{n,i})(x)\big)-g\big(F_{0,i}(x)\big)}{h_n}\,dx
    - \int_{F_{0,i}^\rightarrow(0)}^{F_{0,i}^\leftarrow(1)} g'(F_{0,i}(x))\,v(x)\,dx\Big|=0
$$
for every $i=1,\ldots,r$. So, since $v_{n,i}\in D_{\phi,F_{0,i}}$, $v_{i}\in C_{\phi,F_{0,i}}$, and the restriction of $F_{0,i}$ to $(\F_{0,i}^{\rightarrow}(0),F_{0,i}^{\leftarrow}(1))$ is continuously differentiable with strictly positive derivative, we may without loss of generality restrict ourselves to the case $r=0$. In the remainder of the proof we will show (\ref{proof qhd of t - 15}) for $r=0$.

Let $(t_k)$ be any sequence in $(0,1/2)$ with $t_k\downarrow 0$. Moreover, let the map $g_{k}:[0,1]\to[0,1]$ be defined by
\begin{equation}\label{pigk}
    g_{k}(t)\,:=\,\int_0^{t}\eins_{[t_{k},1-t_{k}]}(s)\,g'(s)\,ds,\qquad t\in[0,1],
\end{equation}
and notice that $g(t)=\lim_{k\to\infty}g_k(t)$ for every $t\in[0,1]$. Then, if we set
\begin{eqnarray}
    a_{n}(g)   & := & \int_{F_0^\rightarrow(0)}^{F_0^\leftarrow(1)}\frac{g\big((F_0+h_nv_n)(x)\big)-g\big(F_0(x)\big)}{h_n}\,dx,\label{ang}\\
    a_{n,k}(g) & := & \int_{F_0^\rightarrow(0)}^{F_0^\leftarrow(1)}\frac{g_{k}\big((F_0+h_nv_n)(x)\big)-g_{k}\big(F_0(x)\big)}{h_n}\,dx\label{ankg}
\end{eqnarray}
and
\begin{eqnarray}
    b(g)   & := & \int_{F_0^\rightarrow(0)}^{F_0^\leftarrow(1)} g'(F_0(x))\,v(x)\,dx,\label{bg}\\
    b_k(g) & := & \int_{F_0^\rightarrow(0)}^{F_0^\leftarrow(1)} g'(F_0(x))\,v(x)\,\eins_{[t_k,1-t_k]}(F_0(x))\,dx.\label{bkg}
\end{eqnarray}
Of course, (\ref{proof qhd of t - 15}) follow if we can show that
\begin{eqnarray}
    \lim_{k\to\infty}\,\limsup_{n\to\infty}\,\sup_{g\in{\cal G}_\rho}\,|a_n(g)-a_{n,k}(g)| & = & 0,\label{proof qhd of t - 20}\\
    \lim_{n\to\infty}\,\sup_{g\in{\cal G}_\rho}\,|a_{n,k}(g)-b_k(g)| & = & 0\qquad\mbox{for all }k\in\N,\label{proof qhd of t - 30}\\
    \lim_{k\to\infty}\,\sup_{g\in{\cal G}_\rho}\,|b_k(g)-b(g)| & = & 0.\label{proof qhd of t - 40}
\end{eqnarray}
We will now verify in Steps 1--3 that (\ref{proof qhd of t - 20})--(\ref{proof qhd of t - 40}) hold true.

{\em Step 1}. We first show (\ref{proof qhd of t - 40}). We clearly have
$$
    \sup_{g\in{\cal G}_\rho}\,|b_k(g)-b(g)|\,\le\,\int_{F_0^\rightarrow(0)}^{F_0^\leftarrow(1)}\sup_{g\in{\cal G}_\rho}\,g'(F_0(x))\,|v(x)|\,\eins_{(0,t_k)\cup(1-t_k,1)}(F_0(x))\,dx,
$$
and the latter integrand converges to zero as $k\to\infty$ for every $x\in(F_0^\rightarrow(0),F_0^\leftarrow(1))$. By part (iii) of Theorem \ref{theorem kusuoka} we also have
$$
    \sup_{g\in{\cal G}_\rho}\,g'(F_0(x))\,|v(x)|\,\eins_{(0,t_k)\cup(1-t_k,1)}(F_0(x))\,\le\,\frac{g_\rho(\widetilde\gamma F_0(x))}{\widetilde\gamma F_0(x)}\,|v(x)|\,\le\,
    \frac{g_\rho(\widetilde\gamma F_0(x))}{\widetilde\gamma F_0(x)\phi(x)}\,\|v\|_\phi
$$
for every $x\in(F_0^\rightarrow(0),F_0^\leftarrow(1))$ and all $\widetilde\gamma\in(0,1)$, and so the integrability condition (b) in Assumption \ref{assumption for qhd} ensures that we may apply the Dominated Convergence Theorem to obtain (\ref{proof qhd of t - 40}).

{\em Step 2}. We next show (\ref{proof qhd of t - 30}). According to Lemma A.1 in \cite{BelomestnyKraetschmer2012} we have
\begin{eqnarray*}
   a_{n,k}(g)
   & = & \int_{F_0^\rightarrow(0)}^{F_0^\leftarrow(1)}\frac{g_k\big((F_0+h_nv_n)(x)\big)-g_k\big(F_0(x)\big)}{h_n}\,dx\\
   & = & \int_0^1\frac{(F_0+h_nv_n)^\leftarrow(t)-F_0^\leftarrow(t)}{h_n}\,dg_k(t)\\
   & = & \int_0^1\frac{(F_0+h_nv_n)^\leftarrow(s)-F_0^\leftarrow(s)}{h_n}\,\eins_{[t_k,1-t_k]}(t)\,g'(t)\,dt.
\end{eqnarray*}
By a change-of-variable $t:=F_0(x)$ and Assumption \ref{assumption for qhd}\,(a), we also have
\begin{eqnarray*}
   b_k(g)
   & = & \int_{F_0^\rightarrow(0)}^{F_0^\leftarrow(1)} g'(F_0(x))\,v(x)\,\eins_{[t_k,1-t_k]}(F_0(x))\,dx\\
   & = & \int_0^1 g'(t)\,v(F_0^\leftarrow(t))\,\eins_{[t_k,1-t_k]}(t)\,\frac{1}{F_0'(F_0^\leftarrow(t))}\,dt.
\end{eqnarray*}
Thus, using part (iii) of Theorem \ref{theorem kusuoka}, we have
\begin{eqnarray*}
   \lefteqn{\sup_{g\in{\cal G}_\rho}\,|a_{n,k}(g)-b_k(g)|}\\
   & \le & \sup_{g\in{\cal G}_\rho}\, \int_0^1\Big|\frac{(F_0+h_nv_n)^\leftarrow(t)-F_0^\leftarrow(t)}{h_n}-\frac{v(F_0^\leftarrow(t))}{F_0'(F_0^\leftarrow(t))}\Big|\,\eins_{[t_k,1-t_k]}(t)\,g'(t)\,dt\\
   & \le & \sup_{t\in[t_k,1-t_k]}\Big|\frac{(F_0+h_nv_n)^\leftarrow(t)-F_0^\leftarrow(t)}{h_n}-\frac{v(F_0^\leftarrow(t))}{F_0'(F_0^\leftarrow(t))}\Big|\,\int_{t_k}^{1-t_k}\sup_{g\in{\cal G}_\rho}\,g'(t)\,dt\\
   & \le & C_k \sup_{t\in[t_k,1-t_k]}\Big|\frac{(F_0+h_nv_n)^\leftarrow(t)-F_0^\leftarrow(t)}{h_n}-\frac{v(F_0^\leftarrow(t))}{F_0'(F_0^\leftarrow(t))}\Big|
\end{eqnarray*}
for $C_k:=\int_{t_k}^{1-t_k}\frac{g_\rho(\widetilde\gamma t)}{\widetilde\gamma t}\,dt<\infty$. Now, part (i) of Lemma 21.4 in \cite{vanderVaart1998} yields that the latter expression converges to zero as $n\to\infty$ for every fixed $k\in\N$.

{\em Step 3}. Finally we will show (\ref{proof qhd of t - 20}). Let $I_n(x)$ denote the closed interval with boundary points $F_0(x)$ and $(F_0+h_nv_n)(x)$. Then we have
\begin{eqnarray}
   \lefteqn{\limsup_{n\to\infty}\,\sup_{g\in{\cal G}_\rho}\,|a_n(g)-a_{n,k}(g)|}\nonumber\\
   & \le & \limsup_{n\to\infty}\,\int \eins_{(F_0^\rightarrow(0),F_0^\leftarrow(1))}(x)\sup_{g\in{\cal G}_\rho}\,\int_{I_n(x)} \frac{\eins_{[0,t_k]\cup[1-t_k,1]}(s)\,g'(s)}{h_n}\,ds\,dx.
   \label{proof qhd of t - 24}
\end{eqnarray}
Notice that the integrand of the $dx$-integral, i.e.
\begin{equation}\label{proof qhd of t - 25}
    G_{n,k}(x)\,:=\,\eins_{(F_0^\rightarrow(0),F_0^\leftarrow(1))}(x)\,\sup_{g\in{\cal G}_\rho}\,\int_{I_n(x)}\frac{\eins_{[0,t_k]\cup[1-t_k,1]}(s)\,g'(s)}{h_n}\,ds,\qquad x\in\R,
\end{equation}
is clearly nonnegative, and measurable w.r.t.\ the Borel $\sigma$-algebra ${\cal B}(\R)$ because its restriction to $\R\setminus\D_n$ (with $\D_n$ the set of discontinuity points of $v_n$) is lower semi-continuous w.r.t.\ the relative topology. In Step 4 below we will show that the integrand $G_{n,k}$ is bounded above by the nonnegative and ${\cal B}(\R)$-measurable function
\begin{equation}\label{proof qhd of t - 28}
    G(x)\,:=\,\Big(\frac{\|v\|_\phi+c}{\gamma}\Big)\,\frac{g_\rho\big(\gamma F_0(x)\big)}{F_0(x)\,\phi(x)},\qquad x\in\R,
\end{equation}
where $c\in(0,\infty)$ is some suitable constant being independent of $n$ (and $k$), and $\gamma$ is as in condition (b) of Assumption \ref{assumption for qhd}. By condition (b) of Assumption \ref{assumption for qhd}, the mapping $G$ is even integrable w.r.t.\ the Lebesgue measure on $\R$. So, applying Fatou's lemma to the sequence $(G-G_{n,k})_{n},$ we obtain from (\ref{proof qhd of t - 24})
\begin{eqnarray}
   \lefteqn{\limsup_{n\to\infty}\,\sup_{g\in{\cal G}_\rho}\,|a_n-a_{n,k}(g)|}\nonumber\\
   & \le & \int \eins_{(F_0^\rightarrow(0),F_0^\leftarrow(1))}(x)\,\limsup_{n\to\infty}\,\sup_{g\in{\cal G}_\rho}\,\int_{I_n(x)} \frac{\eins_{[0,t_k]\cup[1-t_k,1]}(s)\,g'(s)}{h_n}\,ds\,dx.
   \label{proof qhd of t - 26}
\end{eqnarray}
Step 4 below also shows that $G$ defined in (\ref{proof qhd of t - 28}) provides a ${\cal B}(\R)$-measurable majorant of the integrand of the latter $dx$-integral, i.e.\ of
$$
    G_{k}(x)\,:=\,\eins_{(F_0^\rightarrow(0),F_0^\leftarrow(1))}(x)\,\limsup_{n\to\infty}\,\sup_{g\in{\cal G}_\rho}\,\int_{I_n(x)}\frac{\eins_{[0,t_k]\cup[1-t_k,1]}(s)\,g'(s)}{h_n}\,ds,\qquad x\in\R,
$$
and by the integrability condition (b) in Assumption \ref{assumption for qhd} the majorant $G$ is also $dx$-integrable. So, in view of the Dominated Convergence Theorem, it remains to show that $G_{k}(x)$ converges to zero as $k\to\infty$ for $dx$-almost all $x\in\R$. To do so, let $x\in(F_0^\rightarrow(0),F_0^\leftarrow(1))$. Then, by part (iii) of Theorem \ref{theorem kusuoka} and a change-of-variable $y:=F_0^{-1}(s)$ along with Assumption \ref{assumption for qhd}\,(a), we obtain
\begin{eqnarray*}
    \lefteqn{\limsup_{n\to\infty}\,\sup_{g\in{\cal G}_\rho}\,\int_{I_n(x)}\frac{\eins_{[0,t_k]\cup[1-t_k,1]}(s)\,g'(s)}{h_n}\,ds}\\
    & \le & \limsup_{n\to\infty}\,\int_0^1\eins_{I_n(x)}(s)\,\eins_{[0,t_k]\cup[1-t_k,1]}(s)\,\frac{g_\rho(\gamma s)}{\gamma s\,h_n}\,ds\\
    & = & \limsup_{n\to\infty}\,\int_{F_0^\rightarrow(0)}^{F_0^\leftarrow(1)}\eins_{I_n(x)}(F_0(y))\,\eins_{[0,t_k]\cup[1-t_k,1]}(F_0(y))\,\frac{g_\rho(\gamma F_0(y))}{\gamma F_0(y)\,h_n}\,F_0'(y)\,dy.
\end{eqnarray*}
Now, if $k$ is sufficiently large so that $F_0(x)\in(t_k,1-t_k)$, then also $I_n(x)\subset(t_k,1-t_k)$ for $n$ sufficiently large. That is, for sufficiently large $k$ we have that the latter expression equals zero. This implies $G_k(x)\to 0$ as $k\to\infty$ for all $x\in\R$.

{\em Step 4}. Let $G_{n,k}$ and $G$ be defined as in (\ref{proof qhd of t - 25}) and (\ref{proof qhd of t - 28}), respectively. It remains to show that $G_{n,k}\le G$. By the concavity of all $g\in{\cal G}_\rho$ and part (iii) of Theorem \ref{theorem kusuoka}  we obtain for every $x\in(F_0^\rightarrow(0),F_0^\leftarrow(1))$ with $(F_0+h_nv_n)(x)>0$ and $v_n(x)\ge 0$
\begin{eqnarray}
   \sup_{g\in{\cal G}_\rho}\,\int_{I_n(x)}\frac{\eins_{[0,t_k]\cup[1-t_k,1]}(s)\,g'(s)}{h_n}\,ds
   & \le & \frac{\sup_{g\in{\cal G}_\rho}g'\big(F_0(x)\big)}{h_n}\,h_nv_n(x)\nonumber\\
   & \le & \frac{g_\rho\big(\gamma F_0(x)\big)\,v_n(x)\,\phi(x)}{\gamma F_0(x)\,\phi(x)}\nonumber\\
   & \le & \|v_n\|_\phi\,\frac{g_\rho\big(\gamma F_0(x)\big)}{\gamma F_0(x)\,\phi(x)}\nonumber\\
   & \le & \Big(\frac{\|v\|_\phi+c}{\gamma}\Big)\,\frac{g_\rho\big(\gamma F_0(x)\big)}{F_0(x)\,\phi(x)}\label{proof qhd of t - 50}
\end{eqnarray}
for some suitable constant $c\in(0,\infty)$ being independent of $n\in\N$, and $\gamma$ as in condition (b) of Assumption \ref{assumption for qhd}. For every $x\in(F_0^\rightarrow(0),F_0^\leftarrow(1))$ with $(F_0+h_nv_n)(x)>0$ and $v_n(x)<0$ we further obtain by the concavity of all $g\in{\cal G}_\rho$ and part (ii) of Theorem \ref{theorem kusuoka}
\begin{eqnarray}
   \sup_{g\in{\cal G}_\rho}\,\int_{I_n(x)}\frac{\eins_{[0,t_k]\cup[1-t_k,1]}(s)\,g'(s)}{h_n}\,ds
   & \le & \sup_{g\in{\cal G}_\rho}\,\int_{(F_0+h_nv_n)(x)}^{F_0(x)}\frac{\,g'(s)}{h_n}\,ds\nonumber\\
   & =   & \sup_{g\in{\cal G}_\rho}\,\frac{g(F_0(x))-g((F_0+h_nv_n)(x))}{h_n}\nonumber\\
   & \le & \sup_{g\in{\cal G}_\rho}\,\frac{g(F_0(x))-\frac{(F_0+h_nv_n)(x)}{F_0(x)}\,g(F_0(x))}{h_n}\nonumber\\
   & \le & \sup_{g\in{\cal G}_\rho}\,\frac{|v_n(x)|\,g(F_0(x))}{F_0(x)}\nonumber\\
   & \le & \sup_{g\in{\cal G}_\rho}\,\frac{|v_n(x)|\,g(\gamma F_0(x))}{\gamma F_0(x)\,\phi(x)}\nonumber\\
   & \le & \sup_{g\in{\cal G}_\rho}\,\frac{|v_n(x)\phi(x)|\,g(\gamma F_0(x))}{\gamma F_0(x)\,\phi(x)}\nonumber\\
   & \le & \|v_n\|_\phi\sup_{g\in{\cal G}_\rho}\,\frac{g(\gamma F_0(x))}{\gamma F_0(x)\,\phi(x)}\nonumber\\
   & \le & \Big(\frac{\|v\|_\phi+c}{\gamma}\Big)\,\frac{g_\rho(\gamma F_0(x))}{F_0(x)\,\phi(x)}\label{proof qhd of t - 60}
\end{eqnarray}
for some suitable constant $c\in(0,\infty)$ being independent of $n\in\N$, and $\gamma$ as in condition condition (b) of Assumption \ref{assumption for qhd}. Finally, for every $x\in(F_0^\rightarrow(0),F_0^\leftarrow(1))$ with $(F_0+h_nv_n)(x)=0$ (i.e.\ in particular with $h_n=F_0(x)/|v_n(x)|$) we obtain by the concavity of all $g\in{\cal G}_\rho$ and part (ii) of Theorem \ref{theorem kusuoka}
\begin{eqnarray}
   \sup_{g\in{\cal G}_\rho}\,\int_{I_n(x)}\frac{\eins_{[0,t_k]\cup[1-t_k,1]}(s)\,g'(s)}{h_n}\,ds
   & \le & \sup_{g\in{\cal G}_\rho}\,\int_{0}^{F_0(x)}\frac{\,g'(s)}{h_n}\,ds\nonumber\\
   & =   & \sup_{g\in{\cal G}_\rho}\,\frac{g(F_0(x))}{h_n}\nonumber\\
   & =   & \sup_{g\in{\cal G}_\rho}\,\frac{g(F_0(x))\,|v_n(x)|}{F_0(x)}\nonumber\\
   & \le & \sup_{g\in{\cal G}_\rho}\,\frac{g(\gamma F_0(x))\,|v_n(x)|}{\gamma F_0(x)}\nonumber\\
   & =   & \sup_{g\in{\cal G}_\rho}\,\frac{g(\gamma F_0(x))\,|v_n(x)\phi(x)|}{\gamma F_0(x)\,\phi(x)}\nonumber\\
   & \le & \|v_n\|_\phi\,\sup_{g\in{\cal G}_\rho}\,\frac{g(\gamma F_0(x))}{\gamma F_0(x)\,\phi(x)}\nonumber\\
   & \le & \Big(\frac{\|v\|_\phi+c}{\gamma}\Big)\,\frac{g_\rho(\gamma F_0(x))}{F_0(x)\,\phi(x)}\label{proof qhd of t - 70}
\end{eqnarray}
for some suitable constant $c\in(0,\infty)$ being independent of $n\in\N$, and $\gamma$ as in condition condition (b) of Assumption \ref{assumption for qhd}. So we indeed have $G_{n,k}\le G$. This completes the proof of Theorem \ref{theorem qhd of r}.


\subsection{Proof of Theorem \ref{special case drm}}\label{zweiterHauptbeweis}

Let $(v_{n})$ be a sequence in $D_{\phi,F_{0}}$ with $\|v_{n} - v\|_{\phi}\to 0$ for some $v\in D_{\phi,F_{0}},$ and let $(h_{n})$ be a sequence in $(0,\infty)$ with $h_{n}\to 0$ such that $F_{0}+h_{n}v_{n}\in\mathbb{F}_{{\cal X}}$ for every $n$. We have to show
\begin{equation}\label{ableitung distortion risk measure}
    \lim_{n\to\infty}\,\Big|\int_{F_0^\rightarrow(0)}^{F_0^\leftarrow(1)}\frac{g\big((F_0+h_nv_n)(x)\big)-g\big(F_0(x)\big)}{h_n}\,dx-\int_{F_0^\rightarrow(0)}^{F_0^\leftarrow(1)} g'(F_0(x))\,v(x)\,dx\Big|\,=\,0.
\end{equation}
For $\gamma\in (0,1)$ as in condition (b) of Assumption \ref{assumption for qhd - distortion} let us fix a  sequence $(t_{k})$ in $(0,\gamma\wedge \frac12)$ with $t_{k}\downarrow 0$ and $F^{\leftarrow}(t_{k}) = F^{\rightarrow}(t_{k})$ as well as $F^{\leftarrow}(1-t_{k}) = F^{\rightarrow}(1-t_{k})$.  Using the
notations $g_{k}$, $a_{n}(g)$, $a_{n,k}(g)$, $b(g)$ and $b_{k}(g)$ as defined respectively by (\ref{pigk}), (\ref{ang}), (\ref{ankg}), (\ref{bg}), (\ref{bkg}), we may adopt the line of reasoning from the proof of Theorem \ref{theorem qhd of r}. That is, is suffices to show that the following analogues of (\ref{proof qhd of t - 20})--(\ref{proof qhd of t - 40}) hold:
\begin{eqnarray}
    \lim_{k\to\infty}\,\limsup_{n\to\infty}\,|a_n(g)-a_{n,k}(g)| & = & 0,\label{proof qhd of t - 20 - dist risk meas}\\
    \lim_{n\to\infty}\,|a_{n,k}(g)-b_k(g)| & = & 0\qquad\mbox{for all }k\in\N,\label{proof qhd of t - 30 - dist risk meas}\\
    \lim_{k\to\infty}\,|b_k(g)-b(g)| & = & 0.\label{proof qhd of t - 40 - dist risk meas}
\end{eqnarray}
Assertion (\ref{proof qhd of t - 20 - dist risk meas}) can be shown exactly in the same way as (\ref{proof qhd of t - 20}) was shown in Steps 3--4 in Section \ref{ersterHauptbeweis}, where in the present setting ${\cal G}_\rho$ reduces to the singleton $\{g\}$. By continuity and concavity of $g$, we have
\begin{equation}\label{AbschaetzungAbleitung}
    g'(t)\,\le\,\frac{\int_{(1- \lambda) t}^{t} g'(s)\,ds}{\lambda t}\,=\,\frac{g(t) - g((1- \lambda) t)}{\lambda t}\,\le\,\frac{g(\lambda t)}{\lambda t}\qquad\mbox{for all }t,\lambda\in (0,1).
\end{equation}
Then (\ref{proof qhd of t - 40 - dist risk meas}) can be derived in the same way as (\ref{proof qhd of t - 40}) was derived in Step 1 in Section \ref{ersterHauptbeweis}.

Thus, for (\ref{ableitung distortion risk measure}) it remains to show (\ref{proof qhd of t - 30 - dist risk meas}). For fixed $k$, and arbitrary $n$, we have
\begin{eqnarray}
    \lefteqn{|a_{n,k}(g)-b_k(g)|}\label{beweis von fehlenden aussage - 10}\\
    & \le & \int_{F_0^\rightarrow(0)}^{F_0^\leftarrow(1)}\Big|\frac{g_k\big((F_0+h_nv_n)(x)\big)-g_k\big(F_0(x)\big)}{h_n}-g'(F_0(x))\,v(x)\,\eins_{[t_k,1-t_k]}(F_0(x))\Big|\,dx.\nonumber
\end{eqnarray}
We intend to apply the Dominated Convergence Theorem to conclude that the latter integral converges to $0$ as $n\to\infty$. We first show that the integrand converges to zero (as $n\to\infty$) Lebesgue a.e. To this end, let $x\in (F_0^\rightarrow(0),F_0^\leftarrow(1))$. If $g$ is differentiable at $F_{0}(x)$, and $F_{0}(x)$ belongs to $(t_{k},1 - t_{k+1})$, then $(F_0+h_nv_n)(x)\in (t_{k},1 - t_{k+1})$ for sufficient large $n$. This implies that the expression
\begin{eqnarray*}
    \lefteqn{\frac{g_k\big((F_0+h_nv_n)(x)\big)-g_k\big(F_0(x)\big)}{h_n}}\\
    & = & \frac{\big[g\big((F_0+h_nv_n)(x)\big)- g_k(t_{k})\big] - \big[g(\big(F_0(x)\big) - g_k(t_{k})\big]}{h_{n}}\\
    & = & \frac{g\big((F_0+h_nv_n)(x)\big) - g(\big(F_0(x)\big)}{h_{n}}
\end{eqnarray*}
converges to $g'(F_{0}(x)) v(x)$ as $n\to\infty$. If $F_{0}(x)$ does not belong to $[t_{k},1 - t_{k+1}]$, then $(F_0+h_nv_n)(x)\in\R\setminus [t_{k},1 - t_{k+1}]$ for sufficient large $n$.  That means that
$$
    \frac{g_k\big((F_0+h_nv_n)(x)\big)-g_k\big(F_0(x)\big)}{h_n}\,=\,0\quad\mbox{for sufficient large}~n.
$$
Finally, the set $F_{0}^{-1}(\{t_{k},1 - t_{k}\})$ is finite by assumption on $t_{k}.$  To summarize, in view of Assumption \ref{assumption for qhd - distortion}\,(a), we may find some Borel subset $A\subset\R$ of Lebesgue measure zero such that
\begin{eqnarray*}
    \lim_{n\to\infty}\frac{g_k\big((F_0+h_nv_n)(x)\big)-g_k\big(F_0(x)\big)}{h_n}\,=\,\eins_{[t_{k},1-t_{k}]}(F_{0}(x))\,g'(F_{0}(x))\,v(x)
\end{eqnarray*}
holds for $x\in (F_0^\rightarrow(0),F_0^\leftarrow(1))\setminus A$.

In the remainder of the proof, we will show that
\begin{equation}\label{def der majoranten}
    G_k(x)\,:=\,c_k\,\frac{g(\gamma F_{0}(x))}{F_{0}(x)\phi(x)}\,,\qquad x\in\R
\end{equation}
provides an integrable majorant for the integrand of the integral on the right-hand side in (\ref{beweis von fehlenden aussage - 10}), where $c_k\in(0,\infty)$ is some suitable constant being independent of $n$, and $\gamma$ is as in condition (b) of Assumption \ref{assumption for qhd - distortion}. Then (\ref{proof qhd of t - 30 - dist risk meas}) will follow from an application of the Dominated Convergence Theorem. The integrability of $G$ follows from Assumption \ref{assumption for qhd - distortion}\,(b). To show that $G$ is dominating we first use the concavity of $g$ to get for $x\in (F_0^\rightarrow(0),F_0^\leftarrow(1))$ that
\begin{eqnarray*}
    \Big|\frac{g_k\big((F_0+h_nv_n)(x)\big)-g_k\big(F_0(x)\big)}{h_n}\Big|
    & = & \frac{1}{h_n}\,\Big|\int_{F_{0}(x)}^{F_{0}(x) + h_{n} v_{n}(x)}\eins_{[t_{k},1-t_{k}]}(t)\, g'(t)\,dt\Big|\\
    & \le & \frac{1}{h_n}\,\Big|g'(t_{k})\int_{F_{0}(x)}^{F_{0}(x) + h_{n} v_{n}(x)}\eins_{[t_{k},1-t_{k}]}(t)\,dt\Big|\\
    & \le & g'(t_{k})\,|v_{n}(x)|.
\end{eqnarray*}
By assumption, we have $t_{k}\le\gamma$. So we may conclude from (\ref{AbschaetzungAbleitung}) and the monotonicity of $g$ that for $x\in(F_0^\rightarrow(0),F_0^\leftarrow(1))$
$$
    g'(t_{k})\,|v_{n}(x)|\,\le\,\frac{g(F_{0}(x) t_{k})\,|v_{n}(x)|}{F_{0}(x) t_{k}}\,\le\,\frac{g(F_{0}(x) \gamma)\,\|v_{n}\|_{\phi}}{F_{0}(x) t_{k}\,\phi(x)}.
$$
Analogously we obtain for $x\in(F_0^\rightarrow(0),F_0^\leftarrow(1))$
$$
    g'(F_0(x))\,v(x)\,\eins_{[t_k,1-t_k]}(F_0(x))\,\le\,\frac{g(F_{0}(x) \gamma)\,\|v\|_{\phi}}{F_{0}(x) \gamma\,\phi(x)}.
$$
That is, for $c_k:=\sup_{n}\|v_{n}\|_{\phi}/t_{k}+\|v\|/\gamma$ the function $G_k$ defined in (\ref{def der majoranten}) indeed provides a Lebesgue integrable majorant for the integrand of the integral on the right-hand side in (\ref{beweis von fehlenden aussage - 10}); notice that the sequence $(\|v_n\|_\phi)$ is bounded by assumption. This completes the proof of Theorem \ref{special case drm}.


\appendix


\section{Quasi-Hadamard differentiability and quasi-Lipschitz continuity}\label{appendix QHD and FDM}

The following definition recalls from \cite{BeutnerZaehle2010} the notion of quasi-Hadamard differentiability. Let $\V$, $\V'$ and $\V''$ be vector spaces, and $\V_0$ be a subspace of $\V$. Let $\|\cdot\|_{\V_0}$, $\|\cdot\|_{\V'}$, and $\|\cdot\|_{\V''}$, be norms on $\V_0$, $\V'$, and $\V''$, respectively.

\begin{definition}\label{definition quasi hadamard}
Let $f:\V_f\to\V'$ be a map defined on a subset $\V_f$ of $\V$, and $\C_0$ be a subset of $\V_0$. Then $f$ is said to be quasi-Hadamard differentiable at $\theta \in \V_f$ tangentially to $\C_0\langle\V_0\rangle$ if there is some continuous map $D_{\theta;\C_0\langle\V_0\rangle}^{\qHad}f:\C_0\to\V'$ such that
\begin{eqnarray}\label{def eq for HD}
    \lim_{n\to\infty}\Big\| D_{\theta;\C_0\langle\V_0\rangle}^{\qHad}f\,(v)-\frac{f(\theta+h_nv_n)-f(\theta)}{h_n}\Big\|_{\V'}\,=\,0
\end{eqnarray}
holds for each triplet $(v,(v_n),(h_n))$, with $v\in\C_0$, $(v_n)\subset\V_0$ satisfying $\|v_n-v\|_{\V_0}\to 0$ as well as $(\theta+h_nv_n)\subset\V_f$, and $(h_n)\subset(0,\infty)$ satisfying $h_n\to 0$. In this case the mapping $D_{\theta;\C_0\langle\V_0\rangle}^{\qHad}f$ is called quasi-Hadamard derivative of $f$ at $\theta$ tangentially to $\C_0\langle\V_0\rangle$.
\end{definition}

Notice that quasi-Hadamard differentiability tangentially to $\C_0\langle\V_0\rangle$ clearly implies quasi-Hadamard differentiability tangentially to $\B_0\langle\V_0\rangle$ for every $\B_0\subset\C_0$. In this case, $D_{\theta;\B_0\langle\V_0\rangle}^{\qHad}f=D_{\theta;\C_0\langle\V_0\rangle}^{\qHad}f|_{\B_0}$. Also notice that if $\|\cdot\|_{\V_0}$ provides a norm on all of $\V$, $\C_0=\V_0$, and the derivative is linear, then the notion of quasi-Hadamard differentiability at $\theta \in \V_f$ tangentially to $\C_0\langle\V_0\rangle$ coincides with the classical notion of Hadamard differentiability tangentially to $\V_0$ as defined in \cite{Gill1989}, and we write $D_{\theta;\V_0}^{\Had}f$ in place of $D_{\theta;\V_0\langle\V_0\rangle}^{\qHad}f$. We stress the fact that in general $D_{\theta;\V_0}^{\Had}f$ is not the same as $D_{\theta;\V_0\langle\V_0\rangle}^{\qHad}f$, because in the latter case the norm $\|\cdot\|_{\V_0}$ may only be defined on $\V_0$. The preceding discussion shows in particular that quasi-Hadamard differentiability is a weaker notion of ``differentiability'' than the classical (tangential) Hadamard differentiability in the sense of \cite{Gill1989}. However, it was shown in \cite{BeutnerZaehle2010} that this notion is still strong enough to obtain a generalized version of the Functional Delta-Method as given in Theorem 3 of \cite{Gill1989}.

The following chain rule can be proven in the same way as the chain rule in \cite[Theorem 20.9]{vanderVaart1998}; we omit the details. In Condition (b) we will {\em not} insist on linearity of the Hadamard derivative. Note that Hadamard differentiability with possibly nonlinear derivative has been studied before; see, for instance, \cite{Roemisch2006}.

\begin{lemma}\label{lemma chain rule}
Let $f:\V_f\to\V'$ be a map defined on a subset $\V_f$ of $\V$, and $\C_0$ be a subset of $\V_0$. Let $g:\V_g\to\V^{''}$ be a map defined on a subset $\V_g$ of $\V'$ with $f(\V_f) \subset \V_g$. Let $\V_0'$ be a subset of $\V'$, and assume that the following assertions hold:
\begin{itemize}
    \item[(a)] The map $f$ is quasi-Hadamard differentiable at $\theta \in \V_f$ tangentially to $\C_0\langle\V_0\rangle$ with quasi-Hadamard derivative $D_{\theta;\C_0\langle\V_0\rangle}^{\qHad}f$ satisfying $D_{\theta;\C_0\langle\V_0\rangle}^{\qHad}f\,(\C_0) \subset \V_0'$.
    \item[(b)] The map $g$ is Hadamard differentiable at $f(\theta)$ tangentially to $\V_0'$ with Hadamard derivative $D_{f(\theta);\V_0'}^{\Had}g$.
\end{itemize}
Then $g \circ f: \V_f\to\V{''}$ is quasi-Hadamard differentiable at $\theta$ tangentially to $\C_0\langle\V_0\rangle$ with quasi-Hadamard derivative $D_{\theta;\C_0\langle\V_0\rangle}^{\qHad}g\circ f=D_{f(\theta);\V_0'}^{\Had}g \circ D_{\theta;\C_0\langle\V_0\rangle}^{\qHad}f$.
\end{lemma}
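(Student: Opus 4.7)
The plan is to mimic the classical chain-rule argument for Hadamard differentiability (cf.\ \cite[Theorem 20.9]{vanderVaart1998}), with one small twist: the difference-quotient sequence produced by $f$ need not lie inside the tangent set $\V_0'$, but only converge to an element of $\V_0'$ in $\|\cdot\|_{\V'}$-norm, which is exactly what the classical Hadamard differentiability of $g$ along $\V_0'$ is able to handle. Hypothesis (a) is exactly what guarantees that the limit lies in $\V_0'$, and this is where the proof really uses the assumption.

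Concretely, I would start with an arbitrary triplet $(v,(v_n),(h_n))$ admissible for $f$ at $\theta$ in the quasi-Hadamard sense: $v\in\C_0$, $(v_n)\subset\V_0$ with $\|v_n-v\|_{\V_0}\to 0$, $\theta+h_nv_n\in\V_f$, and $h_n\downarrow 0$. Set
\begin{equation*}
   w\,:=\,D^{\qHad}_{\theta;\C_0\langle\V_0\rangle}f(v),\qquad
   w_n\,:=\,\frac{f(\theta+h_nv_n)-f(\theta)}{h_n}\,\in\,\V'.
\end{equation*}
Quasi-Hadamard differentiability of $f$ yields $\|w_n-w\|_{\V'}\to 0$, and hypothesis (a) gives $w\in\V_0'$.

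Next I would feed the triplet $(w,(w_n),(h_n))$ into the Hadamard differentiability of $g$ at $f(\theta)$ tangentially to $\V_0'$. Its admissibility conditions are all in place: $w\in\V_0'$ and $\|w_n-w\|_{\V'}\to 0$ from the previous step, the inclusion $f(\theta)+h_nw_n=f(\theta+h_nv_n)\in\V_g$ comes from $f(\V_f)\subset\V_g$, and $h_n\downarrow 0$. The definition of Hadamard differentiability then delivers
\begin{equation*}
   \Bigl\|\frac{g(f(\theta+h_nv_n))-g(f(\theta))}{h_n}-D^{\Had}_{f(\theta);\V_0'}g(w)\Bigr\|_{\V''}\longrightarrow 0,
\end{equation*}
which is precisely the defining property of the candidate quasi-Hadamard derivative $D^{\Had}_{f(\theta);\V_0'}g\circ D^{\qHad}_{\theta;\C_0\langle\V_0\rangle}f$ of $g\circ f$ at $\theta$ tangentially to $\C_0\langle\V_0\rangle$.

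It remains to verify continuity of this composed map $\C_0\to\V''$, which follows at once from the continuity of each factor with respect to its respective norm (and is where the hypothesis $D^{\qHad}_{\theta;\C_0\langle\V_0\rangle}f(\C_0)\subset\V_0'$ is used again, so that the composition is well defined). There is essentially no technical obstacle here, since the argument is a one-step unwinding of definitions; the one point to emphasize is that linearity of $D^{\Had}_{f(\theta);\V_0'}g$ is never used, so the chain rule applies equally to the nonlinear Hadamard derivatives invoked in the proof of Theorem \ref{theorem qhd of r} (in particular to the sup-functional $\mathcal{S}_\rho$, whose Hadamard derivative is in general nonlinear).
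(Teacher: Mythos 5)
Your argument is correct and is precisely the classical chain-rule unwinding that the paper itself invokes (it cites \cite[Theorem 20.9]{vanderVaart1998} and omits the details); you have simply written out those details, correctly identifying the one delicate point, namely that the difference quotients $w_n$ of $f$ need only converge in $\|\cdot\|_{\V'}$ to an element of $\V_0'$, which is exactly what tangential Hadamard differentiability of $g$ accommodates. Your closing remark that linearity of $D^{\Had}_{f(\theta);\V_0'}g$ is never used matches the paper's own emphasis, so there is nothing to add.
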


\begin{definition}\label{definition quasi continuity}
Let $f:\V_f\to\V'$ be a map defined on a subset $\V_f$ of $\V$. The map $f$ is said to be quasi-Lipschitz continuous at $\theta\in\V_f$ along $\V_0$ if
\begin{eqnarray}\label{def eq for QC - def}
    \|f(\theta+u_n)-f(\theta)\|_{\V'}\,=\,{\cal O}(\|u_n\|_{\V_0})
\end{eqnarray}
holds for every sequences $(u_n)\subset\V_0\setminus\{0_\V\}$ with $(\theta+u_n)\subset\V_f$ and $\|u_n\|_{\V_0}\to 0$.
\end{definition}

\begin{lemma}\label{equiv QC}
Let $f:\V_f\to\V'$ be a map defined on a subset $\V_f$ of $\V$. Then the map $f$ is quasi-Lipschitz continuous at $\theta\in\V_f$ along $\V_0$ if and only if
\begin{eqnarray}\label{def eq for QC}
    \|f(\theta+h_nv_n)-f(\theta)\|_{\V'}\,=\,o(h_n)
\end{eqnarray}
holds for every sequences $(v_n)\subset\V_0$ and $(h_n)\subset(0,\infty)$ with $(\theta+h_nv_n)\subset\V_f$, $\|v_n\|_{\V_0}\to 0$ and $h_n\to 0$.
\end{lemma}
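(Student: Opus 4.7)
The plan is to establish the two implications separately: the forward direction (quasi-Lipschitz continuity implies (\ref{def eq for QC})) will be immediate from the $O$-bound, whereas the reverse direction will require a carefully chosen rescaling in order to manufacture a contradiction.

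For the ``only if'' implication I would fix sequences $(v_n)\subset\V_0$ and $(h_n)\subset(0,\infty)$ with $\|v_n\|_{\V_0}\to 0$, $h_n\to 0$ and $\theta+h_nv_n\in\V_f$, and then set $u_n:=h_nv_n$. Indices with $v_n=0_\V$ contribute nothing to the ratio in (\ref{def eq for QC}); on the remaining indices $(u_n)$ lies in $\V_0\setminus\{0_\V\}$ with $\theta+u_n\in\V_f$ and $\|u_n\|_{\V_0}=h_n\|v_n\|_{\V_0}\to 0$, so by (\ref{def eq for QC - def}) there exist $C>0$ and $n_0\in\N$ with $\|f(\theta+u_n)-f(\theta)\|_{\V'}\le C\|u_n\|_{\V_0}$ for $n\ge n_0$; dividing by $h_n$ gives the bound $C\|v_n\|_{\V_0}\to 0$.

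For the ``if'' implication I would argue by contradiction. Assume (\ref{def eq for QC}) and suppose that $f$ fails to be quasi-Lipschitz continuous at $\theta$ along $\V_0$; then there exists $(u_n)\subset\V_0\setminus\{0_\V\}$ with $\theta+u_n\in\V_f$, $\|u_n\|_{\V_0}\to 0$, and along which the ratios $c_n:=\|f(\theta+u_n)-f(\theta)\|_{\V'}/\|u_n\|_{\V_0}$ are unbounded. After passing to a subsequence I may assume $c_n\to\infty$ and $c_n>0$. The decisive step is to rescale $u_n=h_nv_n$ so that $(v_n)$ and $(h_n)$ satisfy the hypotheses of (\ref{def eq for QC}), yet $\|f(\theta+u_n)-f(\theta)\|_{\V'}/h_n$ stays bounded away from zero. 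The right choice is
\[
\alpha_n:=\max\bigl\{1/c_n,\,\|u_n\|_{\V_0}^{1/2}\bigr\},\qquad h_n:=\|u_n\|_{\V_0}/\alpha_n,\qquad v_n:=u_n/h_n.
\]
Then $\|v_n\|_{\V_0}=\alpha_n\to 0$; the inequality $\alpha_n\ge\|u_n\|_{\V_0}^{1/2}$ gives $h_n\le\|u_n\|_{\V_0}^{1/2}\to 0$; while $\alpha_n\ge 1/c_n$ yields
\[
\|f(\theta+h_nv_n)-f(\theta)\|_{\V'}/h_n\,=\,c_n\|u_n\|_{\V_0}/h_n\,=\,c_n\alpha_n\,\ge\,1,
\]
which contradicts (\ref{def eq for QC}).

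The main, and really only, technical point is the selection of the scaling $\alpha_n$: three competing requirements must hold simultaneously, namely $\alpha_n\to 0$ (to ensure $\|v_n\|_{\V_0}\to 0$), $\|u_n\|_{\V_0}/\alpha_n\to 0$ (to ensure $h_n\to 0$), and $c_n\alpha_n$ bounded below (to prevent (\ref{def eq for QC}) from holding). The first two push $\alpha_n$ downward and the third pushes it upward; taking the pointwise maximum of $1/c_n$ and $\|u_n\|_{\V_0}^{1/2}$ reconciles all three constraints at once and avoids any further subsequence extraction beyond the initial one ensuring $c_n\to\infty$.
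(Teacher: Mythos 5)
Your proof is correct. The overall strategy coincides with the paper's: the forward implication is immediate, and the reverse one is proved by contradiction by rescaling the offending sequence as $u_n=h_nv_n$ so that $(v_n)$, $(h_n)$ fall under the hypotheses of (\ref{def eq for QC}) while the difference quotients stay bounded away from zero. The one genuine difference lies in the choice of scaling. The paper takes $h_k:=\|u_{n_k}\|_{\V_0}z_k=\|f(\theta+u_{n_k})-f(\theta)\|_{\V'}$, so that the contradictory ratio is identically $1$, but it must then prove separately that $h_k\to 0$; this is done by a second application of (\ref{def eq for QC}) to the auxiliary splitting $u_{n_k}=\|u_{n_k}\|_{\V_0}^{1/2}\bigl\{u_{n_k}/\|u_{n_k}\|_{\V_0}^{1/2}\bigr\}$. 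Your capped scaling $\alpha_n:=\max\{1/c_n,\|u_n\|_{\V_0}^{1/2}\}$ builds that square-root bound into the definition of $h_n$ from the start, so $h_n\le\|u_n\|_{\V_0}^{1/2}\to 0$ is automatic and the hypothesis (\ref{def eq for QC}) is invoked only once, at the price of the ratio being merely bounded below by $1$ rather than equal to $1$ --- which is all the contradiction requires. This is a mild but genuine streamlining of the paper's argument; both versions are equally elementary and prove exactly the same statement.
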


\begin{proof}
The sufficiency of (\ref{def eq for QC - def}) for (\ref{def eq for QC}) is obvious. In order to prove the necessity, let $f$ satisfy (\ref{def eq for QC}) and suppose that $f$ does {\em not} satisfy (\ref{def eq for QC - def}). Then there would exist a sequence $(u_n)\subset\V_0\setminus\{0_\V\}$ with $(\theta+u_n)\subset\V_f$ and $\|u_n\|_{\V_0}\to 0$, and a subsequence $(u_{n_k})\subset(u_n)$ such that
$$
    z_k\,:=\,\|f(\theta+u_{n_k})-f(\theta)\|_{\V'}\,/\,\|u_{n_k}\|_{\V_0}\,\rightarrow\,\infty,\qquad k\to\infty.
$$
To verify a contradiction, we set $h_k:=\|u_{n_k}\|_{\V_0}\,z_k$ and $v_k:=u_{n_k}/(\|u_{n_k}\|_{\V_0}z_k)$, where we assume without loss of generality $\|u_{n_k}\|_{\V_0}
\in (0,1]$ and $z_k>0$ for all $k\in\N$. Then, on one hand, we have $\theta+h_kv_k\,(=\theta+u_{n_k})\in\V_f$ and $\|v_k\|_{\V_0}\to 0$. On the other hand,
we have
\begin{eqnarray}
    h_k
    & = & \|u_{n_k}\|_{\V_0}\,z_k\nonumber\\
    & = & \|f(\theta+u_{n_k})-f(\theta)\|_{\V'}\nonumber\\
    & \le & \big\|f\big(\theta+\|u_{n_k}\|_{\V_0}^{1/2}\big\{u_{n_k}/\|u_{n_k}\|_{\V_0}^{1/2}\big\}\big)-f(\theta)\big\|_{\V'}\,/\,\|u_{n_k}\|_{\V_0}^{1/2}.\label{equiv QC - proof - 10}
\end{eqnarray}
Since $\theta+\|u_{n_k}\|_{\V_0}^{1/2}\{u_{n_k}/\|u_{n_k}\|_{\V_0}^{1/2}\}$ $(=\theta+u_{n_k})\in\V_f$, $\|u_{n_k}/\|u_{n_k}\|_{\V_0}^{1/2}\|_{\V_0}\to 0$ and $\|u_{n_k}\|_{\V_0}^{1/2}\to 0$, we may conclude from (\ref{equiv QC - proof - 10}) and (\ref{def eq for QC}) that $h_k\to 0$. Thus, in view of $0 < h_{k} = \|f(\theta+u_{n_k})-f(\theta)\|_{\V'}$ for every $k\in\N,$ we obtain by (\ref{def eq for QC})
$$
    1\,=\,\lim_{k\to\infty}\|f(\theta+h_kv_k)-f(\theta)\|_{\V'}\,/\,h_k\,=\,0.
$$
This is a contradiction.
\end{proof}

It is an immediate consequence of Lemma \ref{equiv QC} that quasi-Lipschitz continuity of $f$ at $\theta$ along $\V_0$ exactly coincides with quasi-Hadamard differentiability of $f$ at $\theta$ tangentially to $\{0_\V\}\langle\V_0\rangle$ with quasi-Hadamard derivative $D_{\theta;\{0_\V\}\langle\V_0\rangle}^{\qHad}f(0_\V)=0_{\V'}$, where $0_\V$ and $0_{\V'}$ denote the nulls in $\V$ and $\V'$, respectively. So we immediately obtain the following lemma.

\begin{lemma}\label{HC implies HD}
Let $f:\V_f\to\V'$ be a map defined on a subset $\V_f$ of $\V$, and $\C_0$ be a subset of $\V_0$ with $0_\V\in\C_0$. If $f$ is quasi-Hadamard differentiable at $\theta \in \V_f$ tangentially to $\C_0\langle\V_0\rangle$ with quasi-Hadamard derivative satisfying $D_{\theta;\C_0\langle\V_0\rangle}^{\qHad}f(0_\V)=0_{\V'}$, then $f$ is quasi-Lipschitz continuous at $\theta$ along $\V_0$. 
\end{lemma}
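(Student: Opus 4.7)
The plan is to combine the characterization of quasi-Lipschitz continuity provided by Lemma \ref{equiv QC} with a direct application of the assumed quasi-Hadamard differentiability, specialized to the admissible tangent vector $v = 0_{\V}$. The key observation is that the definition of quasi-Hadamard differentiability tangentially to $\C_0\langle\V_0\rangle$ allows any sequence $(v_n)\subset\V_0$ with $\|v_n-v\|_{\V_0}\to 0$ for a fixed $v\in\C_0$, so when $v$ is chosen to be the null vector (which lies in $\C_0$ by hypothesis), the relevant sequences are precisely those $(v_n)\subset\V_0$ with $\|v_n\|_{\V_0}\to 0$.

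Concretely, first I would fix an arbitrary sequence $(v_n)\subset\V_0$ and $(h_n)\subset(0,\infty)$ with $(\theta+h_nv_n)\subset\V_f$, $\|v_n\|_{\V_0}\to 0$ and $h_n\to 0$. This is the class of sequences over which condition (\ref{def eq for QC}) in Lemma \ref{equiv QC} must be verified. Since $0_{\V}\in\C_0$ and $\|v_n-0_{\V}\|_{\V_0}=\|v_n\|_{\V_0}\to 0$, the triplet $(0_{\V},(v_n),(h_n))$ satisfies the admissibility requirements in Definition \ref{definition quasi hadamard}.

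Next I would invoke the quasi-Hadamard differentiability of $f$ at $\theta$ for this triplet, obtaining
$$
\lim_{n\to\infty}\Big\|\frac{f(\theta+h_nv_n)-f(\theta)}{h_n}-D_{\theta;\C_0\langle\V_0\rangle}^{\qHad}f(0_{\V})\Big\|_{\V'}=0.
$$
By hypothesis $D_{\theta;\C_0\langle\V_0\rangle}^{\qHad}f(0_{\V})=0_{\V'}$, so this gives $\|f(\theta+h_nv_n)-f(\theta)\|_{\V'}=o(h_n)$, which is exactly (\ref{def eq for QC}). Applying Lemma \ref{equiv QC} then yields quasi-Lipschitz continuity of $f$ at $\theta$ along $\V_0$, completing the proof.

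There is essentially no obstacle: the lemma is a direct instantiation of the quasi-Hadamard derivative at the null tangent vector, once one recognizes that Lemma \ref{equiv QC} reformulates the quasi-Lipschitz condition in the ``$h_n v_n$'' form that matches Definition \ref{definition quasi hadamard}. The only subtlety is ensuring that $0_{\V}\in\C_0$ so that the derivative may be evaluated at $0_{\V}$; this is precisely the assumption made in the statement of the lemma.
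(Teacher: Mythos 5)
Your proposal is correct and follows essentially the same route as the paper: specializing the quasi-Hadamard differentiability condition to the tangent vector $0_{\V}\in\C_0$ yields exactly the $o(h_n)$ condition of Lemma \ref{equiv QC}, which is the paper's own (very brief) argument. No gaps.
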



\section{Separability of the uniform metric on spaces of \cadlag\ functions}\label{Separability of uniform metric}

Let $D[0,1]$ denote the space of \cadlag\ functions on $[0,1]$. It is endowed with the uniform metric $d_{\infty}$. For any sequence $\underline{a} := (a_{k})\subset[0,1]$, we consider the set $D_{\underline{a}}[0,1]$ of all $v\in D[0,1]$ whose discontinuity points belong to $\{a_{k}: k\in\N\}$. We want to show that any such set is $d_{\infty}$-separable. Firstly we shall focus on the sets $D_{\underline{a}}$ based on finite sequences $\underline{a}.$

\begin{lemma}\label{finite separability}
The space $D_{\underline{a}}[0,1]$ is $d_{\infty}$-separable if $\underline{a}$ is a finite sequence in $[0,1]$.
\end{lemma}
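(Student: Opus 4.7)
My plan is to reduce the claim to the separability of $C([a,b])$ with the sup norm, which is classical (polynomials with rational coefficients are a countable dense subset by Stone--Weierstrass). The structure of a c\`adl\`ag function with only finitely many possible discontinuity points is that it is piecewise continuous on closed intervals, which is exactly the setting where such a reduction works.

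First I would list the distinct values of $\underline{a}$ in increasing order, adjoining $0$ and $1$, to obtain a partition $0 = t_0 < t_1 < \cdots < t_N = 1$. Then for any $v \in D_{\underline{a}}[0,1]$, the restriction of $v$ to $[t_{i-1}, t_i)$ is continuous (right-continuous at $t_{i-1}$, continuous on the interior, and admitting a left limit at $t_i$ because its only possible discontinuities lie in $\{a_k\}$ and the open interval $(t_{i-1}, t_i)$ contains none of those). Hence $v|_{[t_{i-1}, t_i)}$ extends to a continuous function $\tilde v_i \in C([t_{i-1}, t_i])$ by setting $\tilde v_i(t_i) := v(t_i^-)$. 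This gives a finite tuple of continuous pieces, plus the single scalar $v(1)$ to account for a possible jump at the right endpoint.

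Next I would fix, for each $i = 1, \dots, N$, a countable dense subset $\mathcal{P}_i \subset (C([t_{i-1}, t_i]), \|\cdot\|_\infty)$ (say, polynomials with rational coefficients). Then I would take as my candidate countable dense subset of $D_{\underline{a}}[0,1]$ the collection $\mathcal{D}$ of all functions of the form
\[
  w(x) \,=\, \sum_{i=1}^{N} p_i(x)\,\eins_{[t_{i-1}, t_i)}(x) + r\,\eins_{\{1\}}(x),\qquad p_i\in\mathcal{P}_i,\ r\in\Q.
\]
Each such $w$ is c\`adl\`ag with possible jumps only at the $t_i$'s, hence lies in $D_{\underline{a}}[0,1]$ (since $\{t_1, \dots, t_{N-1}\} \subset \{a_k\}$ by construction); and $\mathcal{D}$ is countable as a finite product of countable sets.

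Given $v \in D_{\underline{a}}[0,1]$ and $\varepsilon > 0$, I would then choose $p_i \in \mathcal{P}_i$ with $\|\tilde v_i - p_i\|_\infty < \varepsilon$ for each $i$, and $r \in \Q$ with $|v(1) - r| < \varepsilon$. On each half-open piece $[t_{i-1}, t_i)$ one has $|v(x)-w(x)| = |\tilde v_i(x) - p_i(x)| < \varepsilon$, and $|v(1) - w(1)| = |v(1) - r| < \varepsilon$, so $d_\infty(v, w) < \varepsilon$. I do not anticipate a substantive obstacle; the only subtlety is keeping the endpoint at $1$ straight (whether or not $1$ itself lies in $\underline{a}$), but adding the free rational parameter $r$ handles both cases uniformly.
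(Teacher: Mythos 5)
Your proof is correct and follows essentially the same route as the paper's: both decompose a function in $D_{\underline{a}}[0,1]$ into finitely many continuous pieces on the closed subintervals determined by $\underline{a}$ plus the value at $1$, and transfer separability from the product of the spaces $C[t_{i-1},t_i]$ (the paper via a continuous surjection from $\R\times\prod_i C[t_{i-1},t_i]$ onto $D_{\underline{a}}[0,1]$, you via an explicit countable dense family of rational polynomials). The only cosmetic blemish, shared with the paper's own argument, is that when $1\notin\{a_k\}$ your approximants with $r\neq p_N(1)$ fall outside $D_{\underline{a}}[0,1]$; this is harmless since a metric subspace of a separable metric space is separable (or one can simply take $r:=p_N(1)\in\Q$ in that case).
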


\begin{proof}
By assumption, there exists $0 = a_{0} <\dots< a_{r+1} = 1$ such that $D_{\underline{a}}[0,1]$ consists of all $v\in D[0,1]$ whose restriction to $[0,1]\setminus\{a_{1}\dots a_{r + 1}\}$ are continuous. Let $C[a_{i},a_{i+1}]$ denote the space of all continuous real-valued mappings on $[a_{i},a_{i+1}]$ for $i\in\{0,\dots,r\}$. Then the mapping
$$
    \R\,\times\Timesir C[a_{i},a_{i+1}]\longrightarrow D_{\underline{a}}[0,1],\qquad
    (x,f_{0},\dots f_{r})\longmapsto \sum_{i=0}^{r} f_{i}\eins_{[a_{i},a_{i+1})} + x\eins_{\{1\}}
$$
is surjective and  continuous w.r.t.\ the metrics $d$ and $d_\infty$, where the metric $d$ on $\R\,\times\!\!\!\Timesir  C[a_{i},a_{i+1}]$ is defined by
$$
    d((x,f_{0},\dots,f_{r}),(y,g_{0},\dots,g_{r}))\,:=\, |x-y|\vee  \max_{i\in\{0,\dots,r\}}\,\sup_{t\in [a_{i},a_{i+1}]}|f_{i}(t) - g_{i}(t)|.
$$
The proof is complete because the metric $d$ is separable.
\end{proof}

\begin{lemma}\label{fixierte Sprunghoehe}
Let $\varepsilon > 0$ and $v\in D[0,1]$ be such that $|v(x) - v(x_{-})|\leq\varepsilon$ for every $x\in [0,1].$ Then there exists some continuous mapping $w: [0,1]\rightarrow\R$ satisfying $d_{\infty}(v,w) \leq 2\varepsilon.$
\end{lemma}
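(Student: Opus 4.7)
The plan is to use the standard fact that any $v\in D[0,1]$, being c\`adl\`ag, has a finite partition-type modulus of continuity: for every $\delta>0$ one can find $0=t_{0}<t_{1}<\dots<t_{k}=1$ such that
$$
\sup_{s,t\in[t_{i-1},t_{i})}|v(s)-v(t)|\,\le\,\delta\qquad\text{for every }i=1,\dots,k.
$$
I would apply this with $\delta=\varepsilon$. Setting $c_{i}:=v(t_{i-1})$ for $i=1,\dots,k$ and $c_{k+1}:=v(1)$, the oscillation bound gives $|v(t)-c_{i}|\le\varepsilon$ on $[t_{i-1},t_{i})$, and in particular $|v(t_{i}^{-})-c_{i}|\le\varepsilon$. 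Combining this with the hypothesis $|v(t_{i})-v(t_{i}^{-})|\le\varepsilon$ at the partition points yields the crucial estimate
$$
|c_{i+1}-c_{i}|\,\le\,2\varepsilon\qquad\text{for every }i=1,\dots,k.
$$

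Next, I would construct $w$ as a continuous piecewise affine function built around these constants: pick $\eta>0$ with $\eta<\tfrac12\min_{i}(t_{i}-t_{i-1})$, set $w\equiv c_{i}$ on $[t_{i-1}+\eta,t_{i}-\eta]$ (with the obvious modifications on the two outermost pieces and $w(1)=c_{k+1}$), and on each transition window $[t_{i}-\eta,t_{i}+\eta]$ let $w$ be the affine function connecting the value $c_{i}$ at $t_{i}-\eta$ to the value $c_{i+1}$ at $t_{i}+\eta$. By construction $w$ is continuous on $[0,1]$.

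To bound $d_{\infty}(v,w)$, I would check the two cases separately. On a constant piece $[t_{i-1}+\eta,t_{i}-\eta]$ one has $|w(t)-v(t)|=|c_{i}-v(t)|\le\varepsilon$. On a transition window, the symmetry of the interpolation forces $w(t_{i})=(c_{i}+c_{i+1})/2$, hence $|w(t)-c_{i}|\le|c_{i+1}-c_{i}|/2\le\varepsilon$ for $t\in[t_{i}-\eta,t_{i}]$, and $|w(t)-c_{i+1}|\le\varepsilon$ for $t\in[t_{i},t_{i}+\eta]$; combined with the oscillation bounds $|v(t)-c_{i}|\le\varepsilon$ on $[t_{i-1},t_{i})$ and $|v(t)-c_{i+1}|\le\varepsilon$ on $[t_{i},t_{i+1})$, this yields $|w(t)-v(t)|\le2\varepsilon$ on the whole window. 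Taking the supremum over $t\in[0,1]$ gives the desired estimate $d_{\infty}(v,w)\le2\varepsilon$.

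The only part requiring some care is the use of the c\`adl\`ag partition lemma in step one; I would invoke it as a known consequence of the existence of left limits (it is the standard fact that underlies the finiteness of large jumps and the definition of the c\`adl\`ag modulus). After that, the essential input is the interaction of two bounds of size $\varepsilon$ — the oscillation bound on each half-open piece and the jump bound at each partition point — which combine to give the factor $2$ in $2\varepsilon$ via the symmetric linear interpolation; any non-symmetric interpolation would force us into the weaker bound $3\varepsilon$, so symmetry of the transition around $t_{i}$ is the key design choice.
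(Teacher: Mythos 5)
Your argument is correct, but it proceeds by a genuinely different route than the paper. The paper extends $v$ to a neighbourhood of $[0,1]$ and mollifies it with a smooth bump of width $\delta_0$, where $\delta_0$ is chosen via Billingsley's Lemma 12.1 and inequality (12.9) so that the \emph{ordinary} modulus of continuity satisfies ${\cal W}_v(\delta_0)\le 2\varepsilon$ (this is exactly where the jump bound $\varepsilon$ and the c\`adl\`ag modulus each contribute one $\varepsilon$); the estimate $d_\infty(v,w)\le 2\varepsilon$ then follows because the convolution averages values of $v$ at distance at most $\delta_0$. You instead invoke the c\`adl\`ag partition lemma (Billingsley, Section 12, Lemma 1) to get a partition on whose half-open cells $v$ oscillates by at most $\varepsilon$, and build a piecewise-affine $w$; the two $\varepsilon$'s enter through the oscillation bound on each cell and the jump bound at each partition point, combined via the symmetric interpolation. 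Your version is more elementary (no mollifiers, no extension of $v$ beyond $[0,1]$) and even produces a piecewise-affine approximant, while the paper's version yields a $C^\infty$ one; both rest on the same standard c\`adl\`ag input, packaged differently. One detail you should make explicit: at the right endpoint $t_k=1$ the transition window $[1-\eta,1]$ cannot be symmetric about $t_k$, so the symmetry argument you single out as essential does not apply there. The bound nevertheless survives: for $t\in[1-\eta,1)$ the value $w(t)$ is a convex combination $(1-s)c_k+s\,v(1)$, and since $|v(t)-c_k|\le\varepsilon$ while $|v(t)-v(1)|\le|v(t)-v(1^-)|+|v(1^-)-v(1)|\le 2\varepsilon$, convexity gives $|w(t)-v(t)|\le(1-s)\varepsilon+2s\varepsilon\le 2\varepsilon$ (alternatively, interpolate to $v(1^-)$ and set $w(1):=v(1^-)$, which costs only $\varepsilon$ at the endpoint). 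With that sentence added, the proof is complete.
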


\begin{proof}
Let ${\cal W}_{v}$ denote the modulus of continuity of $v$, i.e.\ the mapping
$$
    {\cal W}_{v}: (0,1]\longrightarrow\R,\quad \delta\longmapsto \sup_{|x - y|\leq\delta}\, |v(x) - v(y)|.
$$
Since we have assumed that $|v(x) - v(x_{-})|\leq\varepsilon$ holds for any $x\in [0,1]$, Lemma 12.1 (with $\varepsilon/2$ in place of $\varepsilon$) and (12.9) in \cite{Billingsley1999} ensure that we may find some $\delta_{0}\in (0,1)$ such that ${\cal W}_{v}(\delta_{0}) \leq 2\varepsilon$. Moreover, consider the following mapping
$$
    \eta: \R\longrightarrow \R,\quad x\longmapsto
    \bcswitch
    C\exp(- \frac{1}{1 - x^{2}}) & : & |x| < 1\\
    0 & : & |x| \geq 1
    \ecswitch,
$$
where the positive constant $C$ is chosen so that $\int_{-1}^{1}\eta(x)\, dx = 1.$ Extending $v$ to a mapping $\widehat{v}$ on $\R$ by
$$
    \widehat{v}(x)\,:=\,
    \bcswitch
    v(0) & : & x\in [-\delta_{0},0)\\
    v(x) & : & x\in [0,1]\\
    v(1) & : & x\in (1,1+\delta_{0}]\\
    0 & : & \mbox{otherwise}
    \ecswitch,
$$
we obtain $\int_{\R}|\widehat{v}(x)|\,dx < \infty$. Then it is already known from \cite[p.\,630, Theorem 6]{Evans1998} that the mapping
$$
    \widehat{v}^{\,\delta_{0}}: (-2 +\delta_{0},2-\delta_{0})\longrightarrow\R,\quad x\longmapsto \int_{-\delta_{0}}^{\delta_{0}} \frac{1}{\delta_{0}}\,\eta\Big(\frac{z}{\delta_{0}}\Big)\,\widehat{v}(x - z)\, dz
$$
is infinitely differentiable. In particular, its restriction $w := \widehat{v}^{\,\delta_{0}}|_{[0,1]}$ to $[0,1]$ is continuous. Moreover, for any $x\in [0,1]$, we have
\begin{equation}\label{Annaeherung}
    |w(x) - v(x)|\,=\,\Big|\int_{-1}^{1}\eta(z)\widehat{v}(x - \delta_{0} z)\, dz - \widehat{v}(x)\Big|
    \,\le\,\int_{-1}^{1}\eta(z)|\widehat{v}(x - \delta_{0} z) - \widehat{v}(x)|\, dz.
\end{equation}
On the one hand, if $z\in [-1,1]$ with $x - \delta_{0} z < 0$, then $x < \delta_{0}$ and
$$
    |\widehat{v}(x - \delta_{0} z) - \widehat{v}(x)|\,=\,|v(0) - v(x)|\,\le\,{\cal W}_{v}(\delta_{0})\,\le\, 2\varepsilon.
$$
On the other hand, if $z\in [-1,1]$ with $x - \delta_{0} z > 1$, then
$1 - x < \delta_{0} $ and
$$
    |\widehat{v}(x - \delta_{0} z) - \widehat{v}(x)|\,=\,|v(1) - v(x)|\le {\cal W}_{v}(\delta_{0})\,\le\, 2\varepsilon.
$$
Finally, if $z\in [-1,1]$ with $x - \delta_{0} z \in [0,1]$, then $|x -\delta_{0} z - x| \leq \delta_{0}$ and
$$
    |\widehat{v}(x - \delta_{0} z) - \widehat{v}(x)|\,=\,|v(x - \delta_{0} z) - v(x)|\,\le\, {\cal W}_{v}(\delta_{0})\leq 2\varepsilon.
$$
Hence we may conclude from (\ref{Annaeherung})
$$
    |w(x) - v(x)|\,\leq\,2\varepsilon \int_{-1}^{1}\eta(z)\,dz\,=\,2 \varepsilon.
$$
This completes the proof since $x\in [0,1]$ was arbitrarily chosen.
\end{proof}

\begin{theorem}\label{general separability}
The space $D_{\underline{a}}[0,1]$ is $d_{\infty}$-separable for every sequence $\underline{a} = (a_{k})$.
\end{theorem}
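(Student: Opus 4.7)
My plan is to exhibit a countable $d_\infty$-dense subset $\mathcal{D}$ of $D_{\underline{a}}[0,1]$ as a union of countable dense subsets of finite-truncation spaces. Since $D_{\underline{a}}[0,1]$ depends only on the range $\{a_k:k\in\N\}$ of the sequence, I would first reduce without loss of generality to the case where the $a_k$ are pairwise distinct. For each $N\in\N$, setting $\underline{a}^{(N)} := (a_1,\ldots,a_N)$, one has $D_{\underline{a}^{(N)}}[0,1]\subset D_{\underline{a}}[0,1]$, and Lemma \ref{finite separability} supplies a countable $d_\infty$-dense subset $\mathcal{D}_N\subset D_{\underline{a}^{(N)}}[0,1]$. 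The union $\mathcal{D} := \bigcup_{N\in\N}\mathcal{D}_N$ is then a countable subset of $D_{\underline{a}}[0,1]$, and it remains to verify its density.

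To do this, I would fix $v\in D_{\underline{a}}[0,1]$ and $\varepsilon>0$, and exploit the standard càdlàg fact (already invoked in the proof of Lemma \ref{fixierte Sprunghoehe}) that $v$ has only finitely many discontinuities with jump size exceeding $\varepsilon/4$. Since all discontinuities of $v$ lie in $\{a_k:k\in\N\}$, I can choose $N$ so large that every such ``big jump'' occurs at some $a_k$ with $k\leq N$. Writing $\Delta_k := v(a_k)-v(a_k^-)$, I would then decompose
$$v\,=\,\tilde v\,+\,j_N,\qquad\text{where}\qquad j_N\,:=\,\sum_{k=1}^N\Delta_k\,\eins_{[a_k,1]}.$$
By construction $j_N\in D_{\underline{a}^{(N)}}[0,1]$, and—using the distinctness of the $a_k$—the jump of $j_N$ at each $a_k$ with $k\leq N$ equals $\Delta_k$, so $\tilde v$ is continuous at these points. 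Any remaining discontinuity of $\tilde v$ lies at some $a_k$ with $k>N$ and coincides with the corresponding jump of $v$, hence is of absolute size at most $\varepsilon/4$ by the choice of $N$.

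Lemma \ref{fixierte Sprunghoehe} applied to $\tilde v$ (with threshold $\varepsilon/4$) then yields a continuous $w:[0,1]\to\R$ with $d_\infty(\tilde v,w)\leq\varepsilon/2$. Setting $v_0 := w+j_N\in D_{\underline{a}^{(N)}}[0,1]$ gives $d_\infty(v,v_0)=d_\infty(\tilde v,w)\leq\varepsilon/2$, and density of $\mathcal{D}_N$ inside $D_{\underline{a}^{(N)}}[0,1]$ finally produces some $v'\in\mathcal{D}_N\subset\mathcal{D}$ with $d_\infty(v_0,v')<\varepsilon/2$; the triangle inequality yields $d_\infty(v,v')<\varepsilon$. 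I do not anticipate a serious obstacle: the argument essentially reduces the ``countably many possible jump locations'' case to the ``finitely many'' case of Lemma \ref{finite separability} by absorbing the big jumps into the step function $j_N$ and smoothing out the small-jump residue via Lemma \ref{fixierte Sprunghoehe}. The only subtle book-keeping is ensuring the jumps cancel correctly, which is precisely why the preliminary distinctness reduction is made.
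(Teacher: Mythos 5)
Your argument is correct, and it follows the same overall route as the paper: both proofs reduce to Lemma \ref{finite separability} by approximating a given $v\in D_{\underline{a}}[0,1]$ within $\varepsilon$ by an element of a finite-truncation space, and both use Lemma \ref{fixierte Sprunghoehe} to smooth away the residue once the large jumps have been neutralized. The one structural difference lies in how that neutralization is done. The paper partitions $[0,1]$ at the locations $t_1<\dots<t_r$ of the jumps exceeding $\varepsilon/2$, redefines $v$ at the right endpoint of each subinterval to kill the jump there, mollifies each piece separately, and then glues the pieces back together (which forces some fiddly bookkeeping at $x=1$). You instead subtract the step function $j_N=\sum_{k\le N}\Delta_k\eins_{[a_k,1]}$, mollify the small-jump residue $\tilde v=v-j_N$ globally in one application of Lemma \ref{fixierte Sprunghoehe}, and add $j_N$ back; the price is the verification that the jumps cancel exactly at $a_1,\dots,a_N$, which your preliminary reduction to pairwise distinct $a_k$ handles, and the payoff is that no gluing or endpoint adjustment is needed. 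Two harmless points worth making explicit: at a possible $a_k=0$ the left limit must be interpreted as $v(0)$ (so $\Delta_k=0$), exactly as the paper implicitly does in Lemma \ref{fixierte Sprunghoehe}; and your $j_N$ also removes the small jumps located at $a_1,\dots,a_N$, which is permissible but not necessary. Neither affects the validity of the proof.
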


\begin{proof}
Let $\underline{a}_{k} := \{a_{1},\ldots,a_{k}\}$ for $k\in\N$. In view of Lemma \ref{finite separability}, it remains to show that for every $\varepsilon > 0,$ and any $v\in D_{\underline{a}}[0,1]$, there exist $k_{0}\in\N$ and some $w\in D_{\underline{a}_{k_{0}}}[0,1]$ such that $d_{\infty}(v,w) \leq \varepsilon$. For that purpose let us fix $\varepsilon > 0$ as well as $v\in D_{\underline{a}}[0,1]$. It is well known that the set $\{x\in [0,1]:|v(x) - v(x_{-})| > \varepsilon/2\}$ is finite. Hence there are $k_{1},\ldots,k_{r}\in\N$ and $0=: t_{0} < t_{1} <\cdots < t_{r}\leq t_{r+1} := 1$ such that $t_{i}= a_{k_{i}}$ for $i = 1,\dots,r,$ and $|v(x) - v(x_{-})| \leq \varepsilon/2$ for $x\in [0,1)\setminus\{t_{1},\dots,t_{r}\}$. If $1 = a_{k}$ for some $k\in\N,$ we choose $t_{k_{r}} = a_{k_{r}} = 1$. Select any $k_{0}\in\N$ such that $\{a_{k_{1}},\ldots,a_{k_{r}}\}\subset\{a_{k}: k\in\N\mbox{ with }k\leq k_{0}\}$.

Next, let us define for $i\in\{0,\dots,r\}$ the mapping
$$
    v_{i}: [t_{i},t_{i+1}]\longrightarrow\R,\quad x\longmapsto
    \bcswitch
    v(x) & : &  x < t_{i+1}\\
    v(t_{i+1}-) & : &  x =t_{i+1}.
    \ecswitch
$$
Obviously, $v_{i}$ is a c\`adl\`ag function on $[t_{i},t_{i+1}]$ with $|v_{i}(x) - v_{i}(x_{-})|\leq \varepsilon/2$ for $x\in[t_i,t_{i+1}]$. Since any interval $[a,b]$ with
$a < b$ is homeomorphic with $[0,1]$ via some strictly increasing mapping, we may find by Lemma \ref{fixierte Sprunghoehe} some continuous mapping $w_{i}: [t_{i},t_{i+1}]\rightarrow\R$ satisfying
$|w_{i}(x) - v_{i}(x)|\leq\varepsilon$ for every $x\in [t_{i},t_{i+1}].$ Then the function
$$
    w\,:=\,\sum\limits_{i=0}^{r} w_{i}\eins_{[t_{i},t_{i+1})} + \Big(w_{r}(1)\eins_{\{0\}}(v(1_{-}) - v(1)) + v(1)\eins_{\R\setminus\{0\}}(v(1_{-}) - v(1)) \Big)\eins_{\{1\}}
$$
belongs to $D_{\underline{a}_{k_{0}}}[0,1]$ fulfilling $d_{\infty}(v,w)\leq\varepsilon.$
This completes the proof.
\end{proof}

Let $C_{u,\phi,F_{0}}$ and $C_{0,\phi,F_{0}}$ denote the sets of all $v\in C_{\phi,F_{0}}$ satisfying $\lim_{x\to\pm\infty}v(x)\phi(x) = c_\pm$ for any constants $c_-,c_+\in\R$ and $\lim_{x\to\pm\infty}v(x)\phi(x) = 0$, respectively.

\begin{corollary}\label{monotone separable}
The sets $C_{u,\phi,F_{0}}$ and $C_{0,\phi,F_{0}}$ are $\|\cdot\|_{\phi}$-separable and ${\cal D}_{\phi,F_0}$-measurable.
\end{corollary}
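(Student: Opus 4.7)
My plan is to deduce the corollary from Theorem \ref{general separability} via two successive reductions. First, I use the map $T:(D_{\phi,F_0},\|\cdot\|_\phi)\to(D_{\mathbf{1},F_0},\|\cdot\|_\infty)$, $Tv:=v\phi$, which is an isometric isomorphism with inverse $w\mapsto w/\phi$ (well-defined because $\phi\geq 1$). Since $\phi$ is continuous and strictly positive, $T$ preserves discontinuity sets, and the limit conditions $\lim_{x\to\pm\infty}v(x)\phi(x)=c_\pm$ transform into $\lim_{x\to\pm\infty}(Tv)(x)=c_\pm$. Separability is invariant under isometries, so it suffices to prove separability for the images $T(C_{u,\phi,F_0})$ and $T(C_{0,\phi,F_0})$ inside $(D_{\mathbf{1},F_0},\|\cdot\|_\infty)$.

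Second, I compactify $\R$ by a strictly increasing homeomorphism $\psi:(0,1)\to\R$ (for instance $\psi(t)=\tan(\pi(t-1/2))$) and define $\Phi w\in D[0,1]$ by $(\Phi w)(0):=\lim_{x\to-\infty}w(x)$, $(\Phi w)(t):=w(\psi(t))$ for $t\in(0,1)$, and $(\Phi w)(1):=\lim_{x\to\infty}w(x)$. For any $w\in T(C_{u,\phi,F_0})$ the two boundary limits exist, so $\Phi w$ is c\`adl\`ag on $[0,1]$ (the prescribed boundary values realize right-continuity at $0$ and matching of the left limit at $1$), and $\Phi$ is an isometry onto its image because both boundary limits are automatically bounded by $\sup_x|w(x)|$. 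Letting $\underline a$ be the countable sequence obtained by applying $\psi^{-1}$ to the discontinuity points of $F_0$ lying in $(F_0^\rightarrow(0),F_0^\leftarrow(1))$, together with $\psi^{-1}(F_0^\rightarrow(0))$ and $\psi^{-1}(F_0^\leftarrow(1))$ whenever these are finite, we obtain $\Phi w\in D_{\underline a}[0,1]$, since $w$ vanishes outside $[F_0^\rightarrow(0),F_0^\leftarrow(1)]$ and its interior discontinuities lie within those of $F_0$. Theorem \ref{general separability} then gives that $\Phi(T(C_{u,\phi,F_0}))$ sits inside a $d_\infty$-separable space, hence is separable, and the two isometries transport separability back to $C_{u,\phi,F_0}$. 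Since $C_{0,\phi,F_0}\subset C_{u,\phi,F_0}$ it is separable as well.

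For measurability, I will invoke the fact recalled just before the corollary that every $\|\cdot\|_\phi$-closed, separable subset of $D_{\phi,F_0}$ lies in ${\cal D}_{\phi,F_0}$. Only closedness needs checking. Assume $v_n\in C_{u,\phi,F_0}$ and $\|v_n-v\|_\phi\to 0$; then $v_n\phi\to v\phi$ uniformly, which preserves the c\`adl\`ag property, continuity of the functions at each individual point (hence the constraint on the discontinuity set), and, by a standard Cauchy argument, the existence of the limits at $\pm\infty$; vanishing outside $[F_0^\rightarrow(0),F_0^\leftarrow(1)]$ is trivially preserved. For $C_{0,\phi,F_0}$ one further notes that the limits $c_\pm$ of $v\phi$ satisfy $|c_\pm|\leq\|v-v_n\|_\phi+0\to 0$. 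The main bookkeeping point I expect to require care is verifying that the sequence $\underline a$ defined above exhausts all possible discontinuities of $\Phi w$---including those inherited from jumps of $w$ at the finite endpoints of the support---and that the choice of boundary values of $\Phi w$ avoids introducing spurious jumps at $t=0,1$. Everything else is routine once this bookkeeping is in place.
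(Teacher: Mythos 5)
Your proposal is correct and follows essentially the same route as the paper: strip off the weight via $v\mapsto v\phi$, transport to $[0,1]$ by a homeomorphism, invoke Theorem \ref{general separability}, and then obtain measurability from closedness together with the fact that closed separable subsets of $D_{\phi,F_0}$ belong to ${\cal D}_{\phi,F_0}$. The only cosmetic difference is the direction of the reduction --- you isometrically embed $C_{u,\phi,F_0}$ \emph{into} $D_{\underline a}[0,1]$ and use that subsets of separable metric spaces are separable, whereas the paper maps $D_{\iota(\underline b)}[0,1]$ continuously \emph{onto} $C_{u,\phi,F_0}$ (compactifying only $[F_0^{\rightarrow}(0),F_0^{\leftarrow}(1)]$, which avoids your endpoint bookkeeping) and uses that continuous images of separable spaces are separable.
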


\begin{proof}
Let $\underline{b} = (b_{k})$ denote the (possibly finite) sequence of discontinuities of $F_0$, viewed as a subset of the compactification $[-\infty,\infty]$ of $\R$. We may and do pick a strictly increasing homeomorphism $\iota$ from $[F_{0}^{\rightarrow}(0),F_{0}^{\leftarrow}(1)]$ onto $[0,1]$. Let $D_{\iota(\underline{b})}[0,1]$ be the set of all \cadlag\ functions on $[0,1]$ whose discontinuity points belong to $\{\iota(b_{k}):k\in\N\}$. The mapping $\gamma: D_{\iota(\underline{b})}[0,1]\rightarrow C_{u,\phi,F_0}$ defined by
$$
    x\longmapsto\gamma(g)(x)\,:=\,
    \left\{
        \begin{array}{c@{\quad :\quad}l}
            \frac{g\circ\iota(x)}{\phi(x)} & x\in [F_{0}^{\rightarrow}(0),F_{0}^{\leftarrow}(1)]\\
            0 & \mbox{otherwise}
        \end{array}
    \right.
$$
is surjective and continuous w.r.t.\ the uniform metric on $D_{\iota(\underline{b})}[0,1]$ and $\|\cdot\|_{\phi}$. Since the uniform metric on $D_{\iota(\underline{b})}[0,1]$ is separable by Theorem \ref{general separability}, we may conclude that $C_{u,\phi,F_0}$ is separable w.r.t.\ $\|\cdot\|_{\phi}$. The same arguments show that $C_{0,\phi,F_0}$ is separable w.r.t.\ $\|\cdot\|_{\phi}$. Finally notice that $C_{u,\phi,F_0}$ and $C_{0,\phi,F_0}$ are closed subsets of $D_{\phi,F_{0}}$ w.r.t.\ $\|\cdot\|_{\phi}$. This implies that these sets belong to ${\cal D}_{\phi,F_0}$; cf.\ \cite[hint for Problem 1.7.4]{vanderVaartWellner1996}. This completes the proof.
\end{proof}

\bibliography{QHDbib}{}
\bibliographystyle{abbrv}
\end{document}